\tikzstyle arrowstyle=[scale=1]
\tikzstyle directed=[postaction={decorate,decoration={markings,
    mark=at position .55 with {\arrow[arrowstyle]{stealth}}}}]
\tikzstyle ddirected=[postaction={decorate,decoration={markings,
    mark=at position .45 with {\arrow[arrowstyle]{stealth}},
    mark=at position .55 with {\arrow[arrowstyle]{stealth}}}}]
\tikzstyle reverse directed=[postaction={decorate,decoration={markings,
    mark=at position .45 with {\arrowreversed[arrowstyle]{stealth};}}}]
\tikzstyle reverse ddirected=[postaction={decorate,decoration={markings,
    mark=at position .45 with {\arrowreversed[arrowstyle]{stealth};},
    mark=at position .55 with {\arrowreversed[arrowstyle]{stealth};}}}]
\theoremstyle{plain}
\newtheorem{para}{}[section]
\newtheorem{prop}[para]{Proposition}
\newtheorem{lemma}[para]{Lemma}
\newtheorem*{theorem}{Theorem}
\theoremstyle{remark}
\newtheorem{remark}{Remark}
\newtheorem*{remarkstar}{Remark}
\theoremstyle{definition}
\newcommand{\co}{\colon\thinspace}
\newcommand\bfx{\mathbf{x}}
\begin{document}

\title{Incompressible solvable representations of surface groups}
\author{Jason DeBlois}
\address{Department of Mathematics\\University of Pittsburgh\\301 Thackeray Hall\\Pittsburgh, PA 15260}
\email{jdeblois@pitt.edu}

\author{Daniel Gomez}
\email{dannielgomez87@gmail.com}

\begin{abstract}
The fundamental group of every surface that is not the projective plane or Klein bottle has a representation to a torsion-free group of upper-triangular matrices in $\mathrm{SL}_2(\mathbb{R})$ with no simple loop (i.e. a nontrivial element representing a simple closed curve) in the kernel.
\end{abstract}

\maketitle

We will call a representation (i.e.~a homomorphism) from the fundamental group $\pi_1 S$ of a surface $S$ to another group  \textit{incompressible} if its kernel contains no simple loop; i.e. a non-trivial element representing a simple closed curve in $S$.

\begin{theorem}  For any compact surface $\Sigma$, possibly with boundary, that is not the projective plane or Klein bottle there is an incompressible representation of $\pi_1 \Sigma$ to a torsion-free group of upper triangular matrices in $\mathrm{SL}_2(\mathbb{R})$, which therefore injects under the quotient map to $\mathrm{PSL}_2(\mathbb{K})$ or $\mathrm{PGL}_2(\mathbb{K})$, for $\mathbb{K} = \mathbb{R}$ or $\mathbb{C}$.\end{theorem}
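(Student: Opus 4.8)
The plan is to realize the target as the orientation-preserving affine group and reduce incompressibility to a single non-vanishing condition per simple closed curve. Let $B^{+}$ be the group of upper triangular matrices in $\mathrm{SL}_2(\mathbb{R})$ with positive diagonal; it is isomorphic to $\mathrm{Aff}^{+}(\mathbb{R}) = \{x \mapsto sx + p : s > 0\}$, which is torsion-free, since a finite-order affine map with $s > 0$ must have $s = 1$ and then translation $0$. A homomorphism $\rho \colon \pi_1\Sigma \to \mathrm{Aff}^{+}(\mathbb{R})$ amounts to a dilation homomorphism $\alpha \colon \pi_1\Sigma \to \mathbb{R}^{>0}$ (factoring through $H_1(\Sigma;\mathbb{Z})$) together with a translation cocycle $b$ satisfying $b_{gh} = b_g + \alpha(g)\, b_h$. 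The elementary but decisive observation is that $\rho(g) = \mathrm{id}$ if and only if $\alpha(g) = 1$ and $b_g = 0$; in particular any $g$ with $\alpha(g) \neq 1$ automatically avoids the kernel. So for each essential simple closed curve $\gamma$ I must arrange that either $\alpha(\gamma) \neq 1$ or $b_\gamma \neq 0$.

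First I would dispose of the homologically nontrivial simple closed curves all at once: such a $\gamma$ represents a primitive, hence nonzero, class in $H_1(\Sigma;\mathbb{Z})$, so taking $\log\alpha$ to be a linear functional that is injective on the integral lattice (a generic, residual condition) forces $\alpha(\gamma) \neq 1$ for every such curve simultaneously. The remaining curves are the homologically trivial ones --- the separating curves in the orientable case --- which necessarily satisfy $\alpha(\gamma) = 1$ and lie in the commutator subgroup $\pi_1'$. On $\pi_1'$ the cocycle restricts to a homomorphism $\beta = b|_{\pi_1'} \colon \pi_1' \to \mathbb{R}$ that is equivariant, $\beta(ghg^{-1}) = \alpha(g)\beta(h)$, and hence factors through the Alexander module $A = \pi_1'/\pi_1''$ specialized along $\alpha$. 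For these curves the requirement is exactly $\beta(\gamma) \neq 0$.

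The structural key is that, with the dilations fixed, $b$ ranges over a finite-dimensional real vector space (the cocycles, cut out by the single linear relation coming from $\prod_i [A_i,B_i] = 1$, whose translation part I would compute to be
\[
\sum_i \bigl[(s_i - 1)q_i - (t_i - 1)p_i\bigr],
\]
where $A_i = (s_i,p_i)$, $B_i = (t_i,q_i)$), and on this space $b \mapsto b_\gamma$ is a linear functional. Thus the bad set $\{b_\gamma = 0\}$ is a linear subspace, and it is proper exactly when \emph{some} representation detects $\gamma$; letting the dilations vary too, the bad locus $X_\gamma$ is a proper subvariety of the representation variety $R$ as soon as $\gamma$ is detectable. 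Since simple closed curves fall into countably many isotopy classes, a countable union of proper subvarieties cannot exhaust an irreducible positive-dimensional $R$, so a Baire-category or dimension count produces a single $(\alpha,b)$ missing all of the $X_\gamma$ at once, i.e.\ one incompressible representation. The entire problem therefore reduces to the per-curve detection lemma: every homologically trivial essential simple closed curve is nonzero in the rational Alexander module $A \otimes \mathbb{Q}$, equivalently is detected by some affine representation. I expect this lemma, together with checking that $R$ is irreducible enough for the category argument to run, to be the main obstacle.

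To prove the detection lemma I would put $\gamma$ in standard form: a separating curve bounding a subsurface of genus $h \geq 1$ equals $\prod_{i=1}^{h}[a_i,b_i]$ after an automorphism of $\pi_1$ realized by a mapping class, whence $b_\gamma = \sum_{i=1}^{h}[(s_i-1)q_i - (t_i-1)p_i]$, which is visibly made nonzero by a parameter choice that still satisfies the global relation (adjusting the generators of the complementary subsurface to cancel the dilations). Because a mapping class acts as a module automorphism on $A$, nonvanishing for the standard curve transfers across its orbit and so covers all separating curves. For surfaces with boundary, $\pi_1$ is free, there is no relation to satisfy, and the same computation applies with more freedom. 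For non-orientable surfaces I would use the presentation $\langle a_1,\dots,a_k \mid a_1^2\cdots a_k^2\rangle$ and permit the one-sided generators to have negative dilation, landing in the full upper-triangular group and hence in $\mathrm{PGL}_2$; the projective plane is excluded because $\pi_1 = \mathbb{Z}/2$ has torsion, and the Klein bottle because the torsion in $H_1$ pushes the offending curve irretrievably into the kernel. The delicate part throughout remains the detection lemma and the irreducibility needed for the genericity step; the rest is setup and bookkeeping.
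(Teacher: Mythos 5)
Your affine-group framework (dilation character plus translation cocycle) is a faithful translation of the paper's setup, and your treatment of the closed orientable case runs parallel to it. But there are two genuine gaps. The clear-cut one is that your dichotomy --- homologically nontrivial curves are detected by a generic dilation character, homologically trivial ones lie in $\pi_1'$ and are handled via the Alexander module --- is not exhaustive for non-orientable surfaces. There $H_1(\Upsilon_n;\mathbb{Z})\cong\mathbb{Z}^{n-1}\oplus\mathbb{Z}/2$, and a one-sided simple closed curve with orientable complement (the curve $c$ in the presentation $\langle a_1,b_1,\hdots,a_g,b_g,c \mid [a_1,b_1]\cdots[a_g,b_g]c^2\rangle$, case (2) of Lemma \ref{six cases}) represents the $2$-torsion class: since $c^2=([a_1,b_1]\cdots[a_g,b_g])^{-1}$, every homomorphism $\alpha\co\pi_1\to\mathbb{R}^{>0}$ satisfies $\alpha(c)^2=1$, hence $\alpha(c)=1$, so no choice of dilations detects $c$; yet $c\notin\pi_1'$, so your detection lemma says nothing about it. Your proposed fix --- negative dilations for one-sided generators --- abandons the theorem's target: an upper-triangular matrix in $\mathrm{SL}_2(\mathbb{R})$ acts on the line by $t\mapsto x^2t+xy$ with dilation $x^2>0$, so negative dilations live only in $\mathrm{GL}_2/\mathrm{PGL}_2$; worse, orientation-reversing affine maps $t\mapsto -t+p$ are involutions, so torsion-freeness is lost (this is precisely the Klein bottle phenomenon of Remark \ref{Klein}: incompressible upper-triangular representations exist there, but only with torsion). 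The repair stays inside $\mathrm{SL}_2(\mathbb{R})$ and uses the relation itself: the translation part of $\rho(c)$ is half that of $\rho(c^2)$, and $c^2\in\pi_1'$, so detecting the product of commutators detects $c$ --- this is exactly how the paper folds its cases (2) and (4) into case (6).

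The second gap is your global step, ``a countable union of proper subvarieties cannot exhaust an irreducible positive-dimensional $R$,'' which you yourself flag as unverified; over $\mathbb{R}$ this is not a routine check. The representation space here is a \emph{singular} real hypersurface (the trivial representation is a singular point of $\{\sum_i[(s_i-1)q_i-(t_i-1)p_i]=0\}$, where every partial derivative vanishes), you then pass to the semialgebraic region of positive diagonal entries, and properness of $X_\gamma$ only rules out $X_\gamma=R$: if the relevant region is reducible or disconnected, different $X_\gamma$'s could cover different pieces, and real varieties can even have proper subvarieties that are Euclidean-open in the variety (Whitney-umbrella behavior). So ``irreducible enough'' is doing real, unproven work. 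The paper avoids this machinery entirely: each bad set is shown closed and nowhere dense in the Euclidean topology by explicit arbitrarily small perturbations (the representation space being a complete metric space, as it is closed in $\mathbb{R}^{8g}$), and then Baire's theorem applies; the one delicate point --- perturbing a bad representation all of whose relevant dilations equal $1$, where your ``visibly made nonzero'' adjustment has no room to act --- is isolated there as Lemma \ref{pee}.
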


\begin{remark}  It is not hard to show directly that the projective plane's and Klein bottle's fundamental groups have incompressible representations to upper-triangular matrices in $\mathrm{SL}_2(\mathbb{R})$, but these all have torsion and do not remain incompressible after projectivizing.  See Remark \ref{Klein}.\end{remark}

\begin{remark} The theorem above implies in particular that every simple loop in $\pi_1\Sigma$ survives its \textit{metabelianization}, the quotient by the second term of its derived series.  As a sort of sanity check, we note that this was shown directly in \cite[Example 18]{Zemke}.\end{remark}

This implies an(other) answer to a question of Yair Minsky.  Motivated by the three-dimensional simple loop conjecture, he asked whether there exist non-injective but incompressible representations of hyperbolic surface groups into $\mathrm{PSL}_2(\mathbb{C})$ \cite[Question 5.3]{Minsky}.  Minsky's question has been answered in the affirmative, rather emphatically at this point, in independent works of Cooper--Manning \cite{CoopMan}, Louder \cite{Louder}, Danny Calegari \cite{Calegari}, and Mann \cite{Mann}.  These interesting papers each provide non-injective, incompressible representations that are customized in different ways.

We think this result is worth adding to the pile for a few reasons.  First, our representations are as simple as possible, both in terms of the target's dimension (two) and the fact that it is two-step solvable --- any one-step solvable (i.e.~abelian) representation kills all nullhomologous curves.  And their existence is not implied by previous work.  The set of upper triangular representations in $\mathrm{SL}_2(\mathbb{R})$ has positive codimension in the variety considered by Cooper--Manning; Louder's representations factor through embeddings of limit groups; Mann's are faithful on certain free subgroups of $\pi_1\Sigma$; and Calegari certifies incompressibility using positivity of \textit{scl}, which vanishes on the commutator subgroup of a solvable group (cf.~\cite{scl}).

Our proof also represents a particular approach to proving existence of representations with nice properties, boiled down to its bare bones.  We show for any fixed simple closed curve that the set of upper-triangular representations that kill it is closed and nowhere dense in the set of all such representations, equipped with its  topology inherited from Euclidean space.  The set of incompressible upper-triangular representations is thus a countable intersection of open dense subsets, so by ``Baire's Theorem'' (see eg.~\cite[Theorem 7.2]{Munkres}) it is non-empty.

Cooper--Manning use the same approach, with a different set of representations.  But the spaces and equations that we encounter here are so simple that we need no algebro-geometric machinery.  And our representation spaces ``treat all curves equally'', allowing a key simplification: we can (and do) check nowhere denseness only on a very small set of representatives for mapping class group orbits of simple closed curves.  Our argument is close to self-contained, appealing only to the classification of surfaces, some very basic facts about $\pi_1$, and Baire's theorem.

Finally, our result completes the statement of Theorem 1.3 of \cite{Mann}.  Mann, the only previous author to consider the non-orientable case, proved there that the fundamental group of every non-orientable hyperbolic surface admits non-faithful incompressible representations to $\mathrm{PGL}_2(\mathbb{R})$, save possibly those of the punctured Klein bottle or closed non-orientable genus-$3$ surface.  We cover these cases.  

In fact, although our general method does not provide explicit representations, in the spirit of \cite{Mann} we will describe some for the non-orientable genus-$3$ surface $\Upsilon_3$.  It is well known that the punctured Klein bottle $K$ is a $\pi_1$-injective subsurface of $\Upsilon_3$, so these also give incompressible representations of $\pi_1 K$.  Alternatively, such representations are exhibited directly in \cite[\S 4]{Gomez}.

\newcommand\GenusThreeProp{For the fundamental group of the non-orientable genus-$3$ surface, presented as
\[ \langle a,b,c\,|\, aba^{-1}b^{-1}c^2 = 1 \rangle, \]
a non-injective, incompressible representation to a torsion--free subgroup of $\mathrm{SL}_2(\mathbb{R})$ is determined by $a\mapsto A$, $b\mapsto B$ and $c\mapsto C$, where\begin{align*}
	& A= \begin{pmatrix} x & y \\ 0 & 1/x \end{pmatrix} &
	& B= \begin{pmatrix} z & w \\ 0 & 1/z \end{pmatrix} &
	& C= \begin{pmatrix} 1 & \frac{-1}{2}\left(xy(1-z^2)-zw(1-x^2)\right) \\ 0 & 1\end{pmatrix}
\end{align*}
for any $x,z \in(0,\infty)$ such that $x^pz^q\neq 1$ for all $(p,q)\in\mathbb{Z}^2-\{(0,0)\}$, and any $y,w\in\mathbb{R}$ such that $p(x,y,z,w)\doteq xy(1-z^2)-zw(1-x^2)\ne 0$.  Hence this determines a representation to $\mathrm{PGL}_2(\mathbb{R})$ satisfying (1) through (3) of \cite[Th.~1.2]{Mann}.

Changing the lower-right entry of $C$ to $-1$ produces an incompressible, upper-triangular representation to $\mathrm{PGL}_2(\mathbb{R})$ that is \mbox{\rm two-sided} in the sense of Sections 2 and 6 of \cite{Zemke}, when $\mathrm{PGL}_2(\mathbb{R})$ is given the orientation character that measures the sign of the determinant.}
\theoremstyle{plain}
\newtheorem*{genus3prop}{Proposition \ref{genus 3}}
\begin{genus3prop}\GenusThreeProp\end{genus3prop}

These representations need not be very exotic: $(x,y,z,w)=(2,1,3,0)$ works, for instance.  Studying this case was what originally led us to consider upper-triangular representations to $\mathrm{SL}_2(\mathbb{R})$.  We prove Proposition \ref{genus 3} by simply checking that no simple loop lies in the kernel of the given representation, using the complex of curves of the non-orientable genus-$3$ surface as described by Scharlemann \cite{Scharlemann}.

\subsection*{Acknowledgements}  The first author thanks Ben McReynolds and Alan Reid for helpful correspondence.  We are grateful to the referee for kind and helpful remarks.

\section{The orientable case}\label{orientable}

We address the orientable case of the main theorem first, since it exhibits the proof's main features while being more straightforward in terms of the number of cases to check.  In Section \ref{groups} we fix a description of the fundamental group of an orientable surface and give an explicit classification, up to automorphism, of elements representing simple closed curves.  We topologize the set of upper-triangular representations to $\mathrm{SL}_2(\mathbb{R})$ in Section \ref{spaces} and complete the proof in Section \ref{orientable proof}.

\subsection{Fundamental groups}\label{groups}  For $g\geq 1$, let $\Sigma_g$ be the closed, orientable surface of genus $g$.  We will use the following presentation for its fundamental group $\pi_1\Sigma_g$:
\begin{align}\label{closed or} \pi_1 \Sigma_g \cong \langle a_1,b_1,\hdots,a_g,b_g\,|\,[a_1,b_1]\cdots[a_g,b_g] = 1\rangle \end{align}
Here $[a,b] = aba^{-1}b^{-1}$ is the \textit{commutator} of $a$ and $b$.  From the standard description of $\Sigma_g$ as an identification space of a $4g$-gon it is easy to see that each generator represents a non-separating simple closed curve, and for each $g_0<g$, that the product of commutators $[a_1,b_1]\cdots[a_{g_0},b_{g_0}]$ represents a separating simple closed curve $\gamma_{g_0}$. 

Moreover the surface obtained by \textit{cutting} $\Sigma_g$ along $\gamma_{g_0}$ (see \cite[\S 1.3.1]{FaMa}), which can be taken as the complement in $\Sigma_g$ of a small open regular neighborhood $\mathcal{N}(\gamma_{g_0})$ of $\gamma_{g_0}$, has one component that is a genus-$g_0$ subsurface and one with genus $g-g_0$, each with one boundary component.  We have:

\begin{lemma}\label{oogedy boogedy} Let $\gamma$ be a non-nullhomotopic simple closed curve on $\Sigma_g$.  If $\gamma$ is non-separating then there is an automorphism $\phi$ of $\Sigma_g$ such that $\phi(\gamma)$ is represented in $\pi_1\Sigma_g$ by $a_1$. If $\gamma$ is separating then there is an automorphism $\phi$ of $\Sigma_g$ such that $\phi(\gamma)$ is represented in $\pi_1\Sigma_g$ by $[a_1,b_1]\cdots[a_{g_0},b_{g_0}]$ for some $g_0<g$.\end{lemma}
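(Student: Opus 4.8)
The plan is to prove this via the \emph{change of coordinates principle}: two simple closed curves on $\Sigma_g$ lie in the same orbit of the homeomorphism group precisely when cutting along them produces homeomorphic (collections of) surfaces, and the required self-homeomorphism of $\Sigma_g$ is assembled by re-gluing a homeomorphism of the cut-open pieces. The only substantive input beyond routine $\pi_1$ bookkeeping is the classification of compact surfaces, which says that a connected orientable surface is determined up to homeomorphism by its genus and its number of boundary components, or equivalently by its Euler characteristic together with that boundary count.

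First I would treat the non-separating case. Cutting $\Sigma_g$ along a non-separating curve $\gamma$ yields a connected surface $\Sigma_\gamma$ with two boundary circles; since excising a regular neighborhood of a circle does not change the Euler characteristic, $\chi(\Sigma_\gamma) = \chi(\Sigma_g) = 2 - 2g$, and a connected orientable surface with two boundary circles and this Euler characteristic has genus $g-1$. Cutting along the curve represented by $a_1$ produces a surface with the same two invariants, hence homeomorphic to $\Sigma_\gamma$ by the classification. I would then choose a homeomorphism between these cut-open surfaces carrying the two boundary circles arising from $\gamma$ to those arising from $a_1$, compatibly with the re-gluing maps; re-gluing gives a homeomorphism $\phi$ of $\Sigma_g$ sending $\gamma$ to a curve freely homotopic to $a_1$, i.e. represented by $a_1$ in $\pi_1\Sigma_g$.

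For the separating case, cutting along $\gamma$ disconnects $\Sigma_g$ into two pieces, each with a single boundary circle. If their genera are $g_0$ and $g - g_0$, the Euler-characteristic count $\chi_1 + \chi_2 = 2 - 2g$ with one boundary circle apiece forces the genera to sum to $g$; the hypothesis that $\gamma$ is non-nullhomotopic rules out a disk (genus-$0$, one boundary) component and hence forces $1 \le g_0 \le g-1$. This matches the effect of cutting along $\gamma_{g_0}$, which by the discussion preceding the lemma separates $\Sigma_g$ into a genus-$g_0$ and a genus-$(g-g_0)$ subsurface, each with one boundary circle. Applying the classification to each piece and re-gluing then yields $\phi$ with $\phi(\gamma)$ represented by $[a_1,b_1]\cdots[a_{g_0},b_{g_0}]$.

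The main obstacle is the re-gluing step: one must verify that the homeomorphism of the cut-open surfaces can be arranged to respect the boundary identifications (including orientations) so that it descends to a genuine self-homeomorphism of $\Sigma_g$, and that the resulting image curve lands in the asserted conjugacy class, equivalently free homotopy class, rather than merely being some curve of the correct topological type. Both points are standard consequences of the change of coordinates principle as developed in \cite[\S 1.3]{FaMa}, so in the write-up I would isolate the Euler-characteristic bookkeeping above and invoke that principle for the extension and homotopy-class claims.
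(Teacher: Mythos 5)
Your proposal is correct and takes essentially the same approach the paper intends: the paper explicitly leaves this lemma to the reader as an exercise in the classification of surfaces combined with the change of coordinates principle of \cite[\S 1.3]{FaMa}, and models it on its proof of the non-orientable analogue (Lemma \ref{six cases}), which consists of exactly the cut-along-the-curve, Euler-characteristic-and-boundary-count bookkeeping you carry out. Your explicit genus computations in the two cases, and your deferral of the re-gluing and orientation-compatibility issues to the change of coordinates principle, fill in the omitted details in precisely the way the paper's own non-orientable argument does.
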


This standard fact is an exercise in applying the classification of surfaces (see eg. \cite[Theorem 1.1]{FaMa}) together with the ``change of coordinates principle'' discussed at length in Section 1.3 of the Primer on Mapping Class Groups \cite{FaMa} (thanks to the referee for this reference).  We leave it to the reader but refer also to Lemma \ref{six cases}, where we give a proof of the analogous classification for non-orientable surfaces.



\subsection{Representation spaces}\label{spaces}  For matrices $A = \left(\begin{smallmatrix} x & y \\ 0 & 1/x \end{smallmatrix}\right)$ and $B = \left(\begin{smallmatrix} z & w \\ 0 & 1/z \end{smallmatrix}\right)$, where $x$, $y$, $z$ and $w$ are real with $x,z\neq 0$, a simple direct computation gives\begin{align*}
	[A,B] = \left(\begin{smallmatrix} x & y \\ 0 & 1/x \end{smallmatrix}\right)\left(\begin{smallmatrix} z & w \\ 0 & 1/z \end{smallmatrix}\right)\left(\begin{smallmatrix} 1/x & -y \\ 0 & x \end{smallmatrix}\right)\left(\begin{smallmatrix} 1/z & -w \\ 0 & z \end{smallmatrix}\right) = \left(\begin{smallmatrix} 1 & xy(1-z^2) - zw(1-x^2) \\ 0 & 1 \end{smallmatrix}\right) \end{align*}
Given this, we take $p(x,y,z,w) = xy(1-z^2) - zw(1-x^2)$ and for the closed, orientable surface $\Sigma_g$ of genus $g$ define the \textit{space of upper triangular representations} from $\pi_1\Sigma_g$ to $\mathrm{SL}_2(\mathbb{R})$ as:\begin{align*}
	\mathfrak{U}(g) & = \{\ (x_1,y_1,z_1,w_1,\hdots,x_g,y_g,z_g,w_g)\ |\ \sum_{i=1}^g p(x_i,y_i,z_i,w_i) =0, \\
	   &\ \hspace{1.3in}\mbox{where}\ x_i,y_i,z_i,w_i\in\mathbb{R},\ \mbox{with}\ x_i,z_i\neq 0,\ \mbox{for all}\ i\ \}, \end{align*}
topologized as a subspace of $\mathbb{R}^{4g}$.  A point of $\mathfrak{U}(g)$ determines a representation of $\pi_1(\Sigma_g)$, as presented in (\ref{closed or}), by sending $a_i$ to $A_i = \left(\begin{smallmatrix} x_i & y_i \\ 0 & 1/x_i \end{smallmatrix}\right)$ and $b_i$ to $B_i = \left(\begin{smallmatrix} z_i & w_i \\ 0 & 1/z_i \end{smallmatrix}\right)$ for each $i$.  This evidently gives a bijective correspondence between $\mathfrak{U}(g)$ and the set of upper-triangular representations $\pi_1 \Sigma_g\to \mathrm{SL}_2(\mathbb{R})$.

The map $(x_1,\hdots,w_g)\mapsto(A_1,\hdots,B_g)$ defined above embeds $\mathfrak{U}(g)$ into $M_2(\mathbb{R})^{2g}$, where the set $M_2(\mathbb{R})$ of two-by-two real matrices is itself identified with $\mathbb{R}^4$.  We note that the image of this embedding is closed.  In particular, the upper-triangular matrices $\left\{\left(\begin{smallmatrix} x & y \\ 0 & 1/x\end{smallmatrix}\right)\right\}$ in $\mathrm{SL}_2(\mathbb{R})$ comprise a closed subset of $M_2(\mathbb{R})$, even though $x$ is constrained to lie in $\mathbb{R}-\{0\}$, for the same reason that the graph $\{(x,1/x)\,|\,x\neq0\}$ is a closed subset of $\mathbb{R}^2$. 

Regarding $\mathfrak{U}(g)$ as embedded in $M_2(\mathbb{R})^{2g}$ we obtain the following standard fact.

\begin{lemma}\label{pooka pooka} For any $g>0$ and automorphism $\phi$ of $\pi_1\Sigma_g$, there is a homeomorphism $\Phi$ of $\mathfrak{U}(g)$ taking $\rho$ to $\rho\circ\phi$ for each representation $\rho\co\pi_1 \Sigma_g\to\mathrm{SL}_2(\mathbb{R})$.\end{lemma}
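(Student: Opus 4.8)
The plan is to take $\Phi$ to be the self-map of $\mathfrak{U}(g)$ induced, via the bijective correspondence between $\mathfrak{U}(g)$ and the set of upper-triangular representations, by the precomposition operation $\rho\mapsto\rho\circ\phi$. The first thing to check is that this is well defined, i.e.\ that $\rho\circ\phi$ is again an upper-triangular representation and hence corresponds to a point of $\mathfrak{U}(g)$. This holds because $\rho\circ\phi$ is a composite of homomorphisms, hence a homomorphism $\pi_1\Sigma_g\to\mathrm{SL}_2(\mathbb{R})$, and because $\phi$ carries each generator to a word in the generators, so that $\rho\circ\phi$ sends each generator to a product of the upper-triangular matrices $A_j^{\pm1}$ and $B_j^{\pm1}$, which is again upper-triangular. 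Since precomposition by $\phi^{-1}$ visibly inverts precomposition by $\phi$, the map $\Phi$ is a bijection whose inverse is induced by $\rho\mapsto\rho\circ\phi^{-1}$. It therefore remains only to prove that precomposition by an arbitrary automorphism is continuous; applying this to both $\phi$ and $\phi^{-1}$ will then show that $\Phi$ and $\Phi^{-1}$ are continuous.

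For continuity I would work through the embedding $\iota$ of $\mathfrak{U}(g)$ into $M_2(\mathbb{R})^{2g}$ described above, which is a homeomorphism onto its (closed) image. Fix the words $\phi(a_i)$ and $\phi(b_i)$ in the generators. Under $\iota$, the coordinates of $\Phi(\rho)$ are obtained from those of $\rho$ by evaluating these fixed words on the matrices $A_1,B_1,\dots,A_g,B_g$: each coordinate of $\Phi(\rho)$ is a specific product of the matrices $A_j^{\pm1}$, $B_j^{\pm1}$. Matrix multiplication $M_2(\mathbb{R})\times M_2(\mathbb{R})\to M_2(\mathbb{R})$ is polynomial in the entries, and on $\mathrm{SL}_2(\mathbb{R})$ inversion is given by the adjugate $\left(\begin{smallmatrix} a & b \\ c & d\end{smallmatrix}\right)\mapsto\left(\begin{smallmatrix} d & -b \\ -c & a\end{smallmatrix}\right)$, which is likewise polynomial; so each of the finitely many word maps $M_2(\mathbb{R})^{2g}\to M_2(\mathbb{R})$ determined by $\phi$ is continuous. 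Hence $\iota\circ\Phi\circ\iota^{-1}$ is continuous, and since $\iota$ is a homeomorphism onto its image this gives continuity of $\Phi$.

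The same argument applied to $\phi^{-1}$ shows that $\Phi^{-1}$ is continuous, so $\Phi$ is the desired homeomorphism. There is no serious obstacle here; the only points requiring care are the bookkeeping of well-definedness---confirming that $\rho\circ\phi$ really lands back in $\mathfrak{U}(g)$---and the decision to verify continuity after passing to the matrix embedding. The latter is what makes the argument clean: working directly in the coordinates $(x_1,y_1,\dots,w_g)$ would force one to track the function $1/x_i$ appearing in the inverses $A_i^{-1}$, whereas in $M_2(\mathbb{R})^{2g}$ inversion and multiplication are simply polynomial in the matrix entries.
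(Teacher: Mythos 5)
Your proof is correct and follows essentially the same route as the paper's: both realize the precomposition map through the embedding of $\mathfrak{U}(g)$ into $M_2(\mathbb{R})^{2g}$, express its coordinates as word maps built from matrix multiplication and the adjugate formula for inversion on $\mathrm{SL}_2(\mathbb{R})$ (hence polynomial, hence continuous), and obtain the inverse homeomorphism by running the same argument for $\phi^{-1}$ --- which is exactly the paper's ``contravariant functoriality'' observation. Your explicit check that $\rho\circ\phi$ is again upper-triangular is a small bookkeeping addition the paper leaves implicit, not a different approach.
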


\begin{proof}  For each $i$, write $\phi(a_i)=v_i(a_1,\hdots,b_g)$ and $\phi(b_i)=w_i(a_1,\hdots,b_g)$ for \textit{words} $v_i$ and $w_i$, that is, finite products of the generators and their inverses.  For any upper-triangular representation $\rho\co\pi_1(\Sigma_g)\to\mathrm{SL}_2(\mathbb{R})$, if $\rho(a_i) = A_i$ and $\rho(b_i)= B_i$ then $\rho\circ\phi (a_i) = v_i(A_1,\hdots,B_g)$ and $\rho\circ\phi (b_i) = w_i(A_1,\hdots,B_g)$ for each $i$.  So we define $\Phi\co(M_2(\mathbb{R}))^{2g}\to(M_2(\mathbb{R}))^{2g}$ componentwise by setting $\Phi_i=v_i$ for $i$ odd and $\Phi_i=w_i$ for $i$ even.  Here we realize inversion on $\mathrm{SL}_2(\mathbb{R})$ by the map on $M_2(\mathbb{R})$ that swaps diagonal entries and multiplies off-diagonal entries by $-1$.  $\Phi$ is thus continuous since this map and matrix multiplication are given by polynomial operations on the entries of two-by-two matrices.

That $\Phi$ is a homeomorphism follows from the additional observation that on $(\mathrm{SL}_2(\mathbb{R}))^{2g}$ the assignment $\phi\to\Phi$ is (contravariant) functorial: it takes the identity to the identity and $\phi_1\circ\phi_2$ to $\Phi_2\circ\Phi_1$.  \end{proof}

\subsection{Proof of the main theorem}\label{orientable proof} The with-boundary case follows from the closed case, since every compact surface with boundary is $\pi_1$-injective in the surface obtained by doubling it across its boundary, being the image of a retraction of the double.  So we assume henceforth that $\Sigma$ is closed.  

If $\Sigma$ is the sphere then the trivial representation of $\pi_1\Sigma = \{1\}$ is vacuously incompressible, and if $\Sigma = \Sigma_1$ is the torus then one defines a faithful (hence incompressible) upper-triangular representation of $\pi_1\Sigma \cong \langle a,b\,|\,[a,b] = 1\rangle\cong\mathbb{Z}^2$ by sending $a$ to $\left(\begin{smallmatrix} x & 0 \\ 0 & 1/x \end{smallmatrix}\right)$ and $b$ to $\left(\begin{smallmatrix} z & 0 \\ 0 & 1/z \end{smallmatrix}\right)$ for rationally independent real numbers $x$ and $z$.  So we will assume below that $\Sigma = \Sigma_g$ for $g\geq 2$.

As we observed in Section \ref{spaces}, $\mathfrak{U}(g)$ is the intersection in $M_2(\mathbb{R})^{2g}$ of the $2g$-fold Cartesian product of upper-triangular matrices in $\mathrm{SL}_2(\mathbb{R})$ with the zero set of 
\[ p_g\left(\left(\begin{smallmatrix} x_1 & y_1 \\ * & * \end{smallmatrix}\right),
	\left(\begin{smallmatrix} z_1 & w_1 \\ * & * \end{smallmatrix}\right),\hdots,
	\left(\begin{smallmatrix} z_g & w_g \\ * & * \end{smallmatrix}\right)\right) = \sum_{i=1}^g p(x_i,y_i,z_i,w_i) , \]
where $p(x,y,z,w) = xy(1-z^2) - zw(1-x^2)$.  It is therefore closed in $M_2(\mathbb{R})^{2g}$, hence the subspace metric it inherits from the Euclidean metric on $M_2(\mathbb{R})^{2g} = \mathbb{R}^{8g}$ is complete.  For any non-nullhomotopic simple closed curve $\gamma$ on $\Sigma$, we will show that the set of upper-triangular representations $\pi_1 S\to\mathrm{SL}_2(\mathbb{R})$ in $\mathfrak{U}(g)$ that have an element representing $\gamma$ in the kernel is a closed, nowhere-dense subset of $\mathfrak{U}(g)$.  Baire's theorem (see eg.~\cite[Theorem 7.2]{Munkres}) will then imply that the set of incompressible upper-triangular representations into $\mathrm{SL}_2(\mathbb{R})$ is non-empty, being a countable intersection of open dense subsets of $\mathfrak{U}(g)$.

First suppose that $\gamma$ is non-separating.  Then by Lemma \ref{oogedy boogedy} there is an automorphism $\phi$ of $\Sigma$ that sends $\gamma$ to a curve represented by $a_1$.  Upon fixing a basepoint $p\in\Sigma$ and an arc joining $p$ to its image under $\phi$ we obtain an induced automorphism $\phi_*$ of $\pi_1(\Sigma,p)$.  If $[\gamma]\in\pi_1(\Sigma,p)$ is a based homotopy class representing $\gamma$ then $\phi_*([\gamma])$ is conjugate to $a_1$, hence it lies in the kernel of a representation $\rho$ if and only if $a_1$ does.  Since $\rho\circ\phi_* = \Phi(\rho)$ for each representation $\rho$, where $\Phi$ is the homeomorphism of $\mathfrak{U}(g)$ supplied by Lemma \ref{pooka pooka}, the set of representations that kill a representative of $\gamma$ is the image under $\Phi$ of those that kill $a_1$.  In particular, it is closed and nowhere-dense in $\mathfrak{U}(g)$ if and only if this holds for those that kill $a_1$.

The set of representations that kill $a_1$ is certainly closed in $\mathfrak{U}(g)\subset\mathbb{R}^{4g}$, being its intersection with $\{(1,0)\}\times\mathbb{R}^{4g-2}$.  For any such representation $(1,0,z_1,\hdots,w_g)$, $p(1,0,z_1,w_1) = 0$.  If $z_1= \pm 1$ then for $\epsilon\neq0$, one easily checks that the representations $(1,\epsilon,z_1,\hdots,w_g)$ lie in $\mathfrak{U}(g)$, limit to $(1,0,z_1,\hdots,w_g)$ as $\epsilon\to 0$, and map $a_1$ non-trivially.  If $z_1\ne\pm 1$ then for $\epsilon>0$ one may take $x_1(\epsilon) = 1+\epsilon$ and solve the equation $p(x_1(\epsilon),y_1(\epsilon),z_1,w_1)=0$ for $y_1(\epsilon)$, yielding
\[ y_1(\epsilon) = -\epsilon\,\frac{2+\epsilon}{1+\epsilon}\,\frac{z_1w_1}{1-z_1^2} \]
The representations $(1+\epsilon,y_1(\epsilon),z_1,\hdots,w_g)$ thus lie in $\mathfrak{U}(g)$, map $a_1$ non-trivially, and limit to $(1,0,z_1,\hdots,w_g)$ as $\epsilon\to0$.  It follows that the set of representations that kill $a_1$ is nowhere dense in $\mathfrak{U}(g)$.

\begin{remarkstar} As the referee has observed, the argument above simply shows that $\pm\left(\begin{smallmatrix} 1 & 0 \\ 0 & 1 \end{smallmatrix}\right)$ is not an isolated point of the centralizer of $\left(\begin{smallmatrix} z_1 & w_1 \\ 0 & 1/z_1 \end{smallmatrix}\right)$ among upper-triangular matrices in $\mathrm{SL}_2(\mathbb{R})$. This is fairly obvious from the standpoint of hyperbolic geometry.\end{remarkstar}

If $\gamma$ is separating then an analogous argument appealing to Lemmas \ref{oogedy boogedy} and \ref{pooka pooka} shows that the set of upper-triangular representations to $\mathrm{SL}_2(\mathbb{R})$ that kill a representative of $\gamma$ is closed and nowhere dense if and only if those that kill $[a_1,b_1]\cdots[a_{g_0},b_{g_0}]$ is, for some $g_0$ with $1\leq g_0<g$.  To show this, it will help to know that the ``generic'' zero of the polynomial $p$ has a certain form.

\begin{lemma}\label{pee}  For $p(x,y,z,w) = xy(1-z^2) - zw(1-x^2)$, if $p(x,y,z,w) = 0$ for some $(x,y,z,w)$ with $x,z\neq 0$, then there is a sequence $(x_n,y_n,z_n,w_n)$ converging to $(x,y,z,w)$ as $n\to\infty$ with $p(x_n,y_n,z_n,w_n) = 0$ and either $x_n \neq \pm1$ for all $n$ or $z_n\neq \pm1$ for all $n$.\end{lemma}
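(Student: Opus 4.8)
The plan is to reduce immediately to the only genuinely delicate configuration and then perturb off the bad locus while staying on the zero set $Z = \{p = 0\}$. In every case except when both $x = \pm 1$ and $z = \pm 1$ we have $x \ne \pm 1$ or $z \ne \pm 1$, and then the \emph{constant} sequence $(x_n, y_n, z_n, w_n) = (x,y,z,w)$ already works, since one of the two required conditions holds for all $n$. So I would first dispose of all of these at once, leaving only the configuration $x, z \in \{\pm 1\}$. There $1 - x^2 = 1 - z^2 = 0$, so $p$ vanishes identically in $y$ and $w$ and the constant sequence is useless: I must move $x$ (or $z$) off $\pm 1$ and correct one of the remaining coordinates to return to $Z$.

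The main work is this last configuration, and I would exploit the antisymmetry $p(z,w,x,y) = -p(x,y,z,w)$, which shows $Z$ is invariant under exchanging the pairs $(x,y)$ and $(z,w)$ and simultaneously swaps the two target conditions ``$x_n \ne \pm 1$'' and ``$z_n \ne \pm 1$''. So I may split on $y$ versus $w$. Suppose $y \ne 0$. Set $x_n = x + 1/n$, which avoids $\{0, \pm 1\}$ for large $n$ and tends to $x$, and keep $y_n = y$, $w_n = w$. Solving $p(x_n, y, z_n, w) = 0$ for $z_n$ amounts to the quadratic
\[ x_n y\, z_n^2 + w(1 - x_n^2)\, z_n - x_n y = 0, \]
whose leading coefficient $x_n y$ is nonzero and whose discriminant $w^2(1-x_n^2)^2 + 4 x_n^2 y^2$ is positive, so real roots exist. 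As $n \to \infty$ the middle coefficient tends to $0$ (because $x_n \to \pm 1$) and the equation degenerates to $z^2 = 1$; the two roots accordingly converge to $+1$ and $-1$ in some order, so I choose the root $z_n$ converging to the prescribed value $z \in \{\pm 1\}$. Then $(x_n, y, z_n, w) \to (x,y,z,w)$ lies in $Z$ with $x_n \ne \pm 1$ for all large $n$, and discarding finitely many terms also guarantees $x_n, z_n \ne 0$. The case $y = 0$, $w \ne 0$ is handled identically after applying the exchange symmetry, now producing $z_n \ne \pm 1$.

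The only remaining possibility is $y = w = 0$, i.e.\ the point $(x, 0, z, 0)$ with $x, z \in \{\pm 1\}$; here $p(x_n, 0, z, 0) = 0$ for every $x_n$, so $x_n = x + 1/n$, $y_n = 0$, $z_n = z$, $w_n = 0$ works trivially. I expect the quadratic step to be the only real obstacle: one must check that a root actually converges to the \emph{prescribed} $z$ rather than to $-z$, which is exactly the point where the degeneration $1 - x_n^2 \to 0$ and the hypothesis $y \ne 0$ are both used. Everything else is bookkeeping over the finitely many sign configurations.
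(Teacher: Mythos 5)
Your proposal is correct and follows essentially the same route as the paper: reduce to the case $x,z\in\{\pm 1\}$, perturb $x$ off $\pm 1$ keeping $y,w$ fixed, and solve the resulting quadratic $x_ny\,z_n^2+w(1-x_n^2)z_n-x_ny=0$ for $z_n$, choosing the root that converges to the prescribed value $z$ --- exactly the paper's argument, where the root is written via the quadratic formula with a consistent sign of the radical. The only divergence is the degenerate case $y=0$: the paper observes that with $x=\pm 1$ and $y=0$ the polynomial $p$ vanishes identically in $(z_n,w_n)$, so one may freely take any $z_n\neq\pm 1$ converging to $z$, whereas you split further on $w$ and route the subcase $w\neq 0$ through the antisymmetry $p(z,w,x,y)=-p(x,y,z,w)$ --- more work than necessary, but equally valid.
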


\begin{proof}  We may as well assume $x,z\in\{\pm 1\}$ since otherwise we can use the constant sequence.  Assuming first that $y\neq 0$, we choose a sequence $(x_n)$ converging to $x$ as $n\to\infty$, with $x_n\neq \pm 1$ for all $n$, and with $y$ and $w$ fixed, determine $z_n$ by
\[ z_n = \frac{w(1-x_n^2)\pm \sqrt{w^2(1-x_n^2)^2+4x_n^2y^2}}{-2x_ny} \]
For fixed $y\ne0$ and $w$, and a fixed choice of sign for the radical above, this converges to $\pm 1$ as $x_n\to x$, where the sign of the limit depends only on the signs of $x$ and $y$ and the sign choice for the radical. So we consistently choose the sign of the radical according to what gives the correct value $z\in\{\pm 1\}$ at $x$. 

If $y=0$ then taking $x_n = x\in\{\pm 1\}$ and $y_n = y$ for all $y$, $p(x_n,y_n,z_n,w_n)=0$ for \textit{any} choice of $z_n$ and $w_n$; in particular, for one with $z_n\neq \pm1$ for all $n$ and $z_n\to z$ and $w_n\to w$ as $n\to\infty$.  (This reflects that $\pm\left(\begin{smallmatrix} 1 & 0 \\ 0 & 1 \end{smallmatrix}\right)$ is central in $\mathrm{SL}_2(\mathbb{R})$.)\end{proof}

We now show that for $1\leq g_0<g$, the intersection of $\mathfrak{U}(g)$ with the zero set of $p_{g_0}$ is nowhere dense in $\mathfrak{U}(g)$, where $p_{g_0}$ is regarded as a function on $M_2(\mathbb{R})^{2g}$ by composing with projection to the first $2g_0$ factors.  A point $\bfx=(x_1,\hdots,w_g)$ of $\mathfrak{U}(g)$ in the zero set of $p_{g_0}$ satisfies both
\[ \sum_{i=1}^{g_0} p(x_i,y_i,z_i,w_i) = 0 \quad \mbox{and}\quad \sum_{i=g_0+1}^g p(x_i,y_i,z_i,w_i) = 0, \]
the latter sum being $p_g(\bfx) - p_{g_0}(\bfx)$.  Applying Lemma \ref{pee}, we may perturb $\bfx$ by an arbitrarily small amount and thus ensure that $x_{i}\neq \pm 1$ or $z_{i}\neq\pm 1$ for some $i\leq g_0$ and also $x_{j}\neq\pm 1$ or $z_{j}\neq \pm1$ for some $j\geq g_0+1$.  (Note that this holds automatically for any $i$ such that $p(x_i,y_i,z_i,w_i)\neq 0$.)

Supposing that $x_{i}\neq\pm 1$ and $x_{j}\neq\pm 1$ (the other cases are similar), for any $\epsilon>0$ we may replace $w_i$ by $w_i+\frac{\epsilon}{z_i(1-x_{i}^2)}$ and $w_j$ by $w_j-\frac{\epsilon}{z_j(1-x_j^2)}$ to produce $\bfx_{\epsilon}\in\mathfrak{U}(g)$ with $p_{g_0}(\bfx_{\epsilon}) =-\epsilon$.  There is thus a neighborhood of $\bfx_{\epsilon}$ where $p_{g_0}$ is non-zero, showing that its zero set is nowhere dense as asserted.

As promised, Baire's theorem now implies that there is an incompressible representation from $\pi_1\Sigma$ to an upper-triangular subgroup of $\mathrm{SL}_2(\mathbb{R})$.  One can easily show directly that the only upper-triangular torsion element of $\mathrm{SL}_2(\mathbb{R})$ is $\left(\begin{smallmatrix} -1 & 0 \\ 0 & -1\end{smallmatrix}\right)$.  And $\mathfrak{U}(g)$ has at least $2^{2g}$ connected components determined by the signs of the $x_i$ and $z_i$ (which, we recall, cannot vanish).  We may therefore produce an incompressible representation to a torsion-free subgroup by simply running the argument above in the component containing $(1,0)^{2g}$, where all $x_i$ and $z_i$ are positive.

We finally note that the projectivization maps $\mathrm{SL}_2(\mathbb{K})\to\mathrm{PSL}_2(\mathbb{K})$ and $\mathrm{GL}_2(\mathbb{K})\to\mathrm{PGL}_2(\mathbb{K})$ have kernels consisting of diagonal matrices, for both $\mathbb{K}=\mathbb{R}$ and $\mathbb{K}=\mathbb{C}$.  Since the only non-trivial diagonal element of $\mathrm{SL}_2(\mathbb{R})$ is $\left(\begin{smallmatrix} -1 & 0 \\ 0 & -1 \end{smallmatrix}\right)$, no torsion-free subgroup intersects the kernel of a projectivization map.

\section{The non-orientable case}

The classification of simple closed curves up to automorphism is somewhat more complicated on non-orientable than on orientable surfaces.  To accommodate this we will use three different presentations, based on the polygonal decompositions of Figure \ref{non-orientable models}, for the fundamental group of a non-orientable surface $\Upsilon_n$ of genus $n$, by which we mean the connected sum of $n$ copies of the projective plane.  (Here we will always use ``$n$'' to refer to the non-orientable genus, versus ``$g$'' which continues to reference the orientable genus.)

In the figure, for $k\in\mathbb{N}$, $P_k$ refers to an equilateral Euclidean $k$-gon, and its edges are identified in pairs according to the labeling by homeomorphisms that match the orientations specified by the arrows.  One easily checks in each case that the quotient cell complex has one vertex and is homeomorphic to a non-orientable surface.  In particular, a simple closed curve that intersects the quotient of any edge pair labeled with a ``$c$'' exactly once has a M\"obius band neighborhood.  The red arcs in the figure quotient to curves with this property.

\begin{figure}
\begin{tikzpicture}[scale=0.5]

\begin{scope}
    \fill [opacity=0.1] (4.1,-1.1) arc (30:180:2.2);
    \draw [dotted] (4.1,-1.1) arc (30:180:2.2);
    \fill [opacity=0.1] (0,-2.2) -- (0,-2.8) -- (0.6,-3.8) -- (1.6,-4.4) -- (2.8,-4.4) -- (3.8,-3.8) -- (4.4,-2.8) -- (4.4,-1.6) -- (4.1,-1.1);
    \draw (0,-2.2) -- (0,-2.8);
    \draw (4.4,-1.6) -- (4.1,-1.1);
    \draw [red] (1.1,-4.1) arc (-30:120:0.6);
    \draw [directed] (0,-2.8) -- (0.6,-3.8);
    \draw [directed] (0.6,-3.8) -- (1.6,-4.4);
    \draw [directed] (1.6,-4.4) -- (2.8,-4.4);
    \draw [directed] (2.8,-4.4) -- (3.8,-3.8);
    \draw [directed] (3.8,-3.8) -- (4.4,-2.8);
    \draw [directed] (4.4,-2.8) -- (4.4,-1.6);
    \draw [fill] (0,-2.8) circle [radius=0.1];
    \draw [fill] (1.6,-4.4) circle [radius=0.1];
    \draw [fill] (3.8,-3.8) circle [radius=0.1];
    \draw [fill] (0.6,-3.8) circle [radius=0.1];
    \draw [fill] (2.8,-4.4) circle [radius=0.1];
    \draw [fill] (4.4,-2.8) circle [radius=0.1];
    \draw [fill] (4.4,-1.6) circle [radius=0.1];
    
    \node [below left] at (0.3,-3.3) {$c_n$};
    \node [below left] at (1.3,-4.1) {$c_n$};
    \node [below] at (2.2,-4.4) {$c_1$};
    \node [below right] at (3.1,-4.1) {$c_1$};
    \node [below right] at (4.1,-3.3) {$c_2$};
    \node [right] at (4.4,-2.2) {$c_2$};
    
    \node at (2.2,-2.2) {\large $P_{2n}$};
    \node at (0,0) {(2)};
\end{scope}

\begin{scope}[xshift=8cm]
    \fill [opacity=0.1] (4.1,-1.1) arc (30:150:2.2);
    \draw [dotted] (4.1,-1.1) arc (30:150:2.2);
    \fill [opacity=0.1] (0.3,-1.1) -- (0,-1.6) -- (0,-2.8) -- (0.6,-3.8) -- (1.6,-4.4) -- (2.8,-4.4) -- (3.8,-3.8) -- (4.4,-2.8) -- (4.4,-1.6) -- (4.1,-1.1);
    \draw (0.3,-1.1) -- (0,-1.6);
    \draw (4.4,-1.6) -- (4.1,-1.1);
    \draw [red] (1.1,-4.1) arc (-30:120:0.6);
    \draw [reverse directed] (0,-1.6) -- (0,-2.8);
    \draw [directed] (0,-2.8) -- (0.6,-3.8);
    \draw [directed] (0.6,-3.8) -- (1.6,-4.4);
    \draw [directed] (1.6,-4.4) -- (2.8,-4.4);
    \draw [directed] (2.8,-4.4) -- (3.8,-3.8);
    \draw [reverse directed] (3.8,-3.8) -- (4.4,-2.8);
    \draw [reverse directed] (4.4,-2.8) -- (4.4,-1.6);
    \draw [fill] (0,-1.6) circle [radius=0.1];
    \draw [fill] (0,-2.8) circle [radius=0.1];
    \draw [fill] (1.6,-4.4) circle [radius=0.1];
    \draw [fill] (3.8,-3.8) circle [radius=0.1];
    \draw [fill] (0.6,-3.8) circle [radius=0.1];
    \draw [fill] (2.8,-4.4) circle [radius=0.1];
    \draw [fill] (4.4,-2.8) circle [radius=0.1];
    \draw [fill] (4.4,-1.6) circle [radius=0.1];
    
    \node [left] at (0,-2.2) {$b_g$};
    \node [below left] at (0.3,-3.3) {$c$};
    \node [below left] at (1.3,-4.1) {$c$};
    \node [below] at (2.2,-4.4) {$a_1$};
    \node [below right] at (3.1,-4.1) {$b_1$};
    \node [below right] at (4.1,-3.3) {$a_1$};
    \node [right] at (4.4,-2.2) {$b_1$};
    
    \node at (2.2,-2.2) {\large $P_{4g+2}$};
    \node at (0,0) {(3)};
\end{scope}

\begin{scope}[xshift=16cm]
     \fill [opacity=0.1] (4.1,-1.1) arc (30:120:2.2);
    \draw [dotted] (4.1,-1.1) arc (30:120:2.2);
    \fill [opacity=0.1] (1.1,-0.3) -- (0.6,-0.6) -- (0,-1.6) -- (0,-2.8) -- (0.6,-3.8) -- (1.6,-4.4) -- (2.8,-4.4) -- (3.8,-3.8) -- (4.4,-2.8) -- (4.4,-1.6) -- (4.1,-1.1);
    \draw (1.1,-0.3) -- (0.6,-0.6);
    \draw (4.4,-1.6) -- (4.1,-1.1);
    \draw [red] (0.3,-3.3) arc (-74.745:74.745:1.14);
    \draw [directed] (0.6,-0.6) -- (0,-1.6);
    \draw [directed] (0,-1.6) -- (0,-2.8);
    \draw [directed] (0,-2.8) -- (0.6,-3.8);
    \draw [reverse directed] (0.6,-3.8) -- (1.6,-4.4);
    \draw [directed] (1.6,-4.4) -- (2.8,-4.4);
    \draw [directed] (2.8,-4.4) -- (3.8,-3.8);
    \draw [reverse directed] (3.8,-3.8) -- (4.4,-2.8);
    \draw [reverse directed] (4.4,-2.8) -- (4.4,-1.6);
    \draw [fill] (0.6,-0.6) circle [radius=0.1];
    \draw [fill] (0,-1.6) circle [radius=0.1];
    \draw [fill] (0,-2.8) circle [radius=0.1];
    \draw [fill] (1.6,-4.4) circle [radius=0.1];
    \draw [fill] (3.8,-3.8) circle [radius=0.1];
    \draw [fill] (0.6,-3.8) circle [radius=0.1];
    \draw [fill] (2.8,-4.4) circle [radius=0.1];
    \draw [fill] (4.4,-2.8) circle [radius=0.1];
    \draw [fill] (4.4,-1.6) circle [radius=0.1];
    
    \node [above left] at (0.3,-1.2) {$c$};
    \node [left] at (0,-2.2) {$d$};
    \node [below left] at (0.3,-3.3) {$c$};
    \node [below left] at (1.3,-4.1) {$d$};
    \node [below] at (2.2,-4.4) {$a_1$};
    \node [below right] at (3.1,-4.1) {$b_1$};
    \node [below right] at (4.1,-3.3) {$a_1$};
    \node [right] at (4.4,-2.2) {$b_1$};
    
    \node at (2.2,-2.2) {\large $P_{4g+4}$};
    \node at (0,0) {(4)};
\end{scope}

\end{tikzpicture}
\caption{Polygonal edge pairings yielding non-orientable surfaces.}
\label{non-orientable models}
\end{figure}
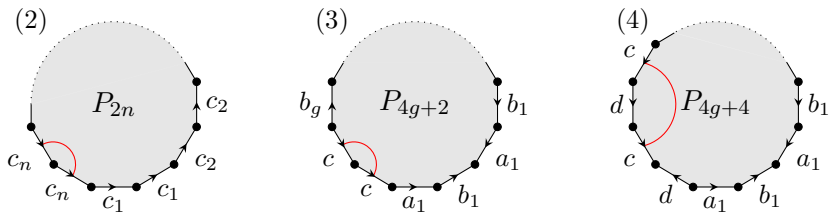

The classification of surfaces asserts that every closed non-orientable surface is homeomorphic to some $\Upsilon_n$, see \cite[Theorem 5.1]{Massey}.  These in turn are distinguished by their Euler characteristics since $\chi(\Upsilon_n) = 2-n$.  The quotient surfaces in cases (2), (3) and (4) of Figure \ref{non-orientable models} have Euler characteristics $2-n$, $1-2g$, and $-2g$, respectively, so they are respectively homeomorphic to $\Upsilon_n$, $\Upsilon_{2g+1}$, and $\Upsilon_{2g+2}$.  The ``$g$'' in each of the latter two cases refers to the fact that the quotient surface is a connected sum of a genus-$g$ orientable surface with a projective plane or Klein bottle, respectively.

The three cases of Figure \ref{non-orientable models} give the following fundamental group presentations:
\begin{align}
  \label{algae} \pi_1(\Upsilon_n) & = \langle\, c_1,\hdots,c_n\,|\, c_1^2c_2^2\cdots c_n^2 = 1\ \rangle \\
  \label{fungi} \pi_1(\Upsilon_{2g+1}) & \cong \langle\, a_1,b_1,\hdots,a_g,b_g,c\,|\, [a_1,b_1]\cdots[a_g,b_g]c^2 = 1\ \rangle \\
  \label{bungee} \pi_1(\Upsilon_{2g+2}) & \cong \langle\, a_1,b_1,\hdots,a_{g},b_{g},c,d\,|\,[a_1,b_1]\cdots[a_{g},b_{g}]cdcd^{-1} = 1\ \rangle\end{align}
The classification of surfaces implies in particular that the latter two presentations yield groups isometric to (even and odd cases, respectively, of) the first.

\begin{lemma}\label{six cases} Let $\gamma$ be a non-nullhomotopic simple closed curve on $\Upsilon_n$.  There is an automorphism $\psi$ of $\Upsilon_n$ such that $\psi(\gamma)$ is represented in $\pi_1\Upsilon_n$ by a member of one of the following classes of elements:\begin{enumerate}
   \item $c_1$, in presentation (\ref{algae});
   \item $c$, in presentation (\ref{fungi});
   \item $a_1$, in presentation (\ref{fungi}) or (\ref{bungee});
   \item $c$, in presentation (\ref{bungee});
   \item $c_1^2\cdots c_{n_0}^2$, for some $n_0<n$, in presentation (\ref{algae}); or
   \item $[a_1,b_1]\cdots[a_{g_0},b_{g_0}]$, for some $g_0\leq g$, in presentation (\ref{fungi}) or (\ref{bungee}).\end{enumerate}
\end{lemma}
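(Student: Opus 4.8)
The plan is to run the non-orientable analogue of Lemma~\ref{oogedy boogedy}, organizing everything around the \emph{change of coordinates principle} of \cite[\S1.3]{FaMa}: the mapping-class-group orbit of an essential simple closed curve $\gamma$ is determined by the homeomorphism type of the complementary surface $\Upsilon_n\smallsetminus\mathcal{N}(\gamma)$ together with the data recording how $\gamma$ is reglued. I would therefore first attach to $\gamma$ a short list of topological invariants, show these take only six possible values, and then exhibit one of the six listed elements realizing each value. The automorphism $\psi$ is produced at the end by cutting both $\Upsilon_n$ and the relevant model surface of Figure~\ref{non-orientable models} along the two curves, matching the resulting cut surfaces by a homeomorphism that respects the regluing, and gluing back up.

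The invariants I would record are whether $\gamma$ is \emph{one-sided} (M\"obius band neighborhood) or \emph{two-sided} (annular neighborhood), whether $\gamma$ separates, and the orientability and genus of each component of $\Upsilon_n\smallsetminus\mathcal{N}(\gamma)$; by \cite[Theorem~5.1]{Massey} these last data determine the components up to homeomorphism. Since a regular neighborhood of $\gamma$ has Euler characteristic $0$, each cut surface inherits total Euler characteristic $2-n$, which pins down the genera. A one-sided $\gamma$ leaves a connected surface with one boundary circle, either orientable (of genus $g$, forcing $n=2g+1$) as in case (2), or non-orientable (of genus $n-1$) as in case (1). A two-sided non-separating $\gamma$ leaves a connected surface with two boundary circles, orientable (forcing $n=2g+2$) as in case (4) or non-orientable as in case (3). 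A two-sided separating $\gamma$ leaves two components, each with one boundary circle; as $\Upsilon_n$ is non-orientable at least one component must be non-orientable, giving case (5) when both are and case (6) when one is orientable of genus $g_0\le g$. This exhausts the possibilities.

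Next I would verify, reading the edge pairings of Figure~\ref{non-orientable models}, that each listed element is a simple closed curve of the asserted type. Any curve crossing a single $c$-edge exactly once is one-sided, as already noted; in model (2) the complement of $c_1$ is the remaining non-orientable piece (case (1)), while in model (3) the lone crosscap sits on an otherwise orientable genus-$g$ surface, so cutting along $c$ yields an orientable complement (case (2)). The curve $a_1$ traverses a handle of the orientable part and is two-sided and non-separating with non-orientable complement in both (\ref{fungi}) and (\ref{bungee}) (case (3)); the curve $c$ of (\ref{bungee}) is the two-sided non-separating curve of the Klein-bottle summand whose complement is orientable (case (4)); and $c_1^2\cdots c_{n_0}^2$ and $[a_1,b_1]\cdots[a_{g_0},b_{g_0}]$ visibly separate off the subsurfaces demanded by cases (5) and (6).

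The step I expect to be delicate is the final regluing: after matching the cut surfaces abstractly via \cite[Theorem~5.1]{Massey}, I must choose the identifying homeomorphism so that it carries the boundary-regluing pattern of $\gamma$ to that of the corresponding model curve. For a separating $\gamma$ this only requires matching the single boundary circle of each complementary piece, which the classification supplies at once. For a non-separating two-sided $\gamma$ one must also reconcile the pairing of the two boundary circles together with a possible orientation reversal, and for a one-sided $\gamma$ the self-identification (fold) of a single boundary circle. The requisite flexibility comes from self-homeomorphisms of the cut surface that permute and reverse its boundary parametrizations; these are what let one model curve represent every curve of its type, and in the non-orientable pieces they are especially plentiful because such a surface admits orientation-reversing symmetries. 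Verifying that this flexibility always suffices is the crux of the lemma, and it can alternatively be extracted from Scharlemann's description of the curve complex of $\Upsilon_n$ \cite{Scharlemann}. With it in hand, the change of coordinates principle produces $\psi$ and the classification follows.
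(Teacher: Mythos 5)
Your proposal is correct and takes essentially the same route as the paper: both rest on the change of coordinates principle of \cite[\S 1.3]{FaMa} (observing it does not require orientability), classify curves by sidedness, separation, and the orientability and Euler characteristic of the complementary pieces into exactly the same six types, and realize each type by the listed model curves of Figure \ref{non-orientable models}. The regluing verification you flag as the crux is precisely what the paper outsources to the discussion in \cite[\S 1.3.1]{FaMa}, so your added care there is a difference of emphasis rather than of method.
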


\begin{proof}  As it did for Lemma \ref{oogedy boogedy}, here the change of coordinates principle \cite[\S 1.3]{FaMa} implies that a simple closed curve $\gamma$ on $\Upsilon_n$ is determined up to automorphism by the topology of its complement, or equivalently, the complement of a small regular neighborhood of $\gamma$.  (Strictly speaking,  \cite{FaMa} is only concerned with orientable surfaces, but the discussion in \S 1.3.1 there leading up to this assertion does not require it.)  

One source of complication here is that a simple closed curve $\gamma$ on $\Upsilon_n$ may be one- or two-sided, with regular neighborhood homeomorphic to a M\"obius band or annulus, respectively.  In the former case $\gamma$ is not even locally separating, and cutting along $\gamma$ yields a surface with only one boundary component.  We now enumerate possibilities for a non-nullhomotopic such curve $\gamma$:\begin{enumerate}
   \item $\gamma$ is non-separating and one-sided, and its complement is non-orientable.  The curve represented by $c_1$ in presentation (\ref{algae}) has these properties.
   \item $\gamma$ is non-separating and one-sided, with orientable complement.   Note that in this case $n$ must be odd, since the complement in $\Upsilon_n$ of a regular neighborhood of $\gamma$ is an orientable surface with one boundary component.  The curve represented by $c$ in presentation (\ref{fungi}) has these properties.
   \item $\gamma$ is non-separating and two-sided, with non-orientable complement.  The curves represented by $a_1$ in presentations (\ref{fungi}) and (\ref{bungee}) have these properties, in the respective cases $n$ odd and $n$ even.
   \item $\gamma$ is non-separating and two-sided, with orientable complement.  In this case $n$ must be even, for reasons analogous to case (2) above.  The curve represented by $c$ in presentation (\ref{bungee}) has this property.
   \item $\gamma$ is separating, and both components of its complement are non-orientable.  For each $n_0$ with $1\leq n_0<n$, the curve represented by $c_1^2\cdots c_{n_0}^2$ in presentation (\ref{algae}) has this property.
   \item $\gamma$ is separating, and one component of its complement is orientable.  For each $g_0$ with $1\leq g_0\leq g$, the curves represented by $[a_1,b_1]\cdots[a_{g_0},b_{g_0}]$ in presentations (\ref{fungi}) and (\ref{bungee}) have this property.
\end{enumerate}
Since identifying two orientable surfaces along a boundary component always yields an orientable surface, at most one complementary component of $\gamma$ is orientable.  And it follows from the classification of surfaces that for any fixed $k\geq 0$, compact surfaces with $k$ boundary components are classified up to homeomorphism by their orientability and Euler characteristic.  Since cutting along a simple closed curve preserves Euler characteristic, the above list of possibilities is exhaustive.
\end{proof}

If $C_i = \left(\begin{smallmatrix} x_i & y_i \\ 0 & 1/x_i \end{smallmatrix}\right)$, for $x_i\in\mathbb{R}-\{0\}$ and $y_i\in\mathbb{R}$, then $C_i^2 =  \left(\begin{smallmatrix} s_i & t_i \\ 0 & 1/s_i \end{smallmatrix}\right)$ where $s_i=x_i^2$ and $t_i = y_i(x_i+1/x_i)$.  For such matrices $C_1,\hdots,C_n$,
\[ C_1^2\cdots C_n^2 = \begin{pmatrix} s_1\cdots s_n & q_n(s_1,t_1,\hdots,s_n,t_n) \\ 0 & 1/(s_1\cdots s_n) \end{pmatrix}, \]
where
\[ q_n(s_1,t_1,\hdots,s_n,t_n) = s_1\cdots s_{n-1}t_n + \sum_{i=2}^{n-1} \frac{s_1\cdots s_{i-1}t_i}{s_{i+1}\cdots s_n} + \frac{t_1}{s_2\cdots s_n} \]
With the above definitions of $s_i$, $t_i$, and $q_g$ we therefore define:\begin{align*}
	\mathfrak{V}(n) = \{ (x_1,y_1,\hdots,x_n,y_n) & \,|\,x_i\in\mathbb{R}-\{0\}\ \mbox{and}\ y_i\in\mathbb{R}\ \mbox{for all}\ i,\\
	&\quad s_1\cdots s_n = 1,\ \mbox{and}\ q_n(s_1,t_1,\hdots,s_n,t_n) = 0\ \}
\end{align*}
Given the presentation (\ref{algae}), arguing as in Section \ref{spaces} shows that there is a bijective correspondence between $\mathfrak{V}(n)$ and the set of upper-triangular representations of $\pi_1\Upsilon_n$ into $\mathrm{SL}_2(\mathbb{R})$ which takes a $2n$-tuple $(x_1,y_1,\hdots,x_n,y_n)$ to a representation $\rho$ satisfying
\[ \rho(c_1) = C_1 \doteq \left(\begin{smallmatrix} x_1 & y_1 \\ 0 & 1/x_1 \end{smallmatrix}\right),\ \hdots\ ,
\rho(c_n) = C_n \doteq \left(\begin{smallmatrix} x_n & y_n \\ 0 & 1/x_n \end{smallmatrix}\right)  \]
As in Section \ref{spaces}, we regard $\mathfrak{V}(n)$ as a subspace of $M_2(\mathbb{R})^n$ via the embedding $(x_1,\hdots,y_n) \mapsto (C_1,\hdots,C_n)$ as above.  This is again a closed subspace, so it is complete with the subspace metric.

\begin{proof}[Proof of the main theorem]  We need only address the non-orientable case, given Section \ref{orientable proof}.  As there, we handle the with-boundary case by doubling and applying the closed case.  The one exception here is the M\"obius band, i.e.~the one-holed projective plane, whose fundamental group is cyclic and therefore represents faithfully to the upper-triangular matrices in $\mathrm{SL}_2(\mathbb{R})$.  An Euler characteristic calculation shows that the double of each other non-orientable surface with boundary has genus at least four.

We now fix a closed non-orientable surface $\Upsilon_n$ of genus $n\geq 3$.  The proof here parallels the orientable case: for each of the six elements of $\pi_1\Upsilon_n$ listed in Lemma \ref{six cases}, we check that the set of representations that kill it is closed and nowhere dense in $\mathfrak{V}(n)$.  The result then follows as before from Baire's theorem and Lemma \ref{pooka pooka}, whose proof applies without revision to $\mathfrak{V}(n)$.

We will assume below that all $x_i>0$ for each $i$, and more generally that all matrices have positive diagonal entries; that is, we work in $\mathfrak{V}(n)\cap ((0,\infty)\times\mathbb{R})^n$.  This is a closed subspace of $\mathfrak{V}(n)$, since $(0,\infty)$ is closed in $\mathbb{R}-\{0\}$, and it is non-empty since it contains the trivial representation.  As we observed previously, the set of upper-triangular matrices with positive diagonal entries is a torsion-free subgroup of $\mathrm{SL}_2(\mathbb{R})$.

Note also that the map $C\mapsto C^2$ is a self-homeomorphism of this group; or equivalently, that $(x,y)\mapsto (s,t)$, for $s = x^2$ and $t = y(x+1/x)$, is a self-homeomorphism of $(0,\infty)\times\mathbb{R}$.  Given this, below we will deform representations by changing $s$ and $t$, with the understanding that applying the inverse of the above homeomorphism determines a deformation of $x$ and $y$.

The set of representations that kill $c_1$ is closed in $\mathfrak{V}(n)$, being its intersection with $\{(1,0)\}\times\mathbb{R}^{2n-2}$.  Note that also $s_1 =1$ and $t_1 = 0$ for such representations. For any such, $(1,0,s_2,t_2,\hdots,s_n,t_n)\in\mathfrak{V}(n)$, and any $\epsilon>0$, define 
\[ (1,\epsilon,s_2,t_2,\hdots,s_{n-1},t_{n-1},s_n,t_n(\epsilon)) \]
by solving the equation $q_n(s_1,\hdots,t_n(\epsilon)) = 0$ for $t_n(\epsilon)$.  (Note that $q_n$ is linear in $t_i$ for each $i$, and each $t_i$ has a non-zero coefficient.)  Thus the set of representations that kill $c_1$ is nowhere-dense in $\mathfrak{V}(n)$.  This takes care of case (1) of Lemma \ref{six cases}.

We now address case (5) there.  For $n_0<n$, note that the set of representations that kill $c_1^2\cdots c_{n_0}^2$ is the intersection of $\mathfrak{V}(n)$ with the loci $s_1\cdots s_{n_0}=1$ and $q_{n_0}(s_1,\hdots,t_{n_0}) = 0$.  Therefore it is closed in $\mathfrak{V}(n)$.  Let us suppose that $(s_1,t_1,\hdots,s_n,t_n)$ kills $c_1^2\cdots c_{n_0}^2$.  Given $\epsilon>0$, let $s_n(\epsilon) = 1/((s_1+\epsilon)s_2\cdots s_{n-1})$ and define $(s_1+\epsilon,t_1,s_2,\hdots,t_{n-1},s_n(\epsilon),t_n(\epsilon))$ by  solving 
\[ q_n(s_1+\epsilon,t_1,\hdots,t_{n-1},s_n(\epsilon),t_n(\epsilon)) =0\] 
for $t_n(\epsilon)$.  Since $n_0<n$, this defines arbitrarily small deformations of the original one that do not kill $c_1^2\cdots c_{n_0}^2$, finishing case (5).

For the other cases of Lemma \ref{six cases} we will work in representation spaces that are related to our other presentations for $\pi_1(\Upsilon_n)$.  For instance, the set $\mathfrak{V}'(2g+1)$ described below evidently corresponds bijectively to the set of upper-triangular representations from the group presented in (\ref{fungi}) to $\mathrm{SL}_2(\mathbb{R})$.\begin{align*}
	\{ (x_1,y_1,z_1,w_1,&\hdots,x_g,y_g,z_g,w_g,x,y)\,|\,x_i,z_i\in\mathbb{R}-\{0\}\ \mbox{and}\ y_i,w_i\in\mathbb{R}\ \mbox{for all}\ i,\ \\
	&\qquad x,y\in\mathbb{R},\ x^2 =1,\ \mbox{and}\ \sum_{i=1}^g p(x_i,y_i,z_i,w_i) + y(x+1/x) = 0 \} \end{align*}
Here we take $p(x,y,z,w)$ as in Lemma \ref{pee}.

Let $\phi$ be the isomorphism from the group presented in (\ref{fungi}) to $\pi_1(\Upsilon_{n})$ as presented in (\ref{algae}), for $n=2g+1$.  As mentioned above Lemma \ref{six cases}, such an isomorphism exists by the classification of surfaces.  In fact it can be written down directly, but the mere fact that it exists implies, arguing as in Lemma \ref{pooka pooka}, that $\mathfrak{V}'(2g+1)$ is homeomorphic to $\mathfrak{V}(n)$.  To address cases from Lemma \ref{six cases} involving the presentation (\ref{fungi}), we may therefore work with $\mathfrak{V}'(2g+1)$.

As a first observation, we note that since we have assumed $x>0$, $x^2 =1$ if and only if $x=1$.  So the two criteria defining $\mathfrak{V}'(2g+1)$ reduce to $x=1$ and\begin{align}\label{ugga mugga}
	\sum_{i=1}^g p(x_i,y_i,z_i,w_i) + 2y = 0 \end{align}
Now for $g_0\leq g$ and $\bfx=(x_1,\hdots,w_g,1,y)$ specifying a representation killing $[a_1,b_1]\cdots[a_{g_0},b_{g_0}]$ from the presentation (\ref{fungi}) we have $\sum_{i=1}^{g_0} p(x_i,y_i,z_i,w_i)=0$ by the computations of Section \ref{spaces}.  We now argue as in the last few paragraphs of Section \ref{orientable proof}: first apply Lemma \ref{pee} if necessary, perturbing $\bfx$ by an arbitrarily small amount to ensure that $x_i\ne 1$ or $z_i\ne1$ for some $i\leq g_0$ (recall that we are assuming these are positive).  Assuming that, say, $x_i\neq 1$, for $\epsilon>0$ we then take $w_i(\epsilon) = w_i+\frac{\epsilon}{z_i(1-x_i^2)}$, so that\begin{align}\label{poo}
	p(x_i,y_i,z_i,w_i(\epsilon)) + \sum_{j\ne i} p(x_j,y_j,z_j,w_j) = -\epsilon \end{align}
Replacing $y$ by $y+\epsilon$ we thus obtain $(x_1,\hdots,w_i(\epsilon),\hdots,w_g,1,y+\epsilon)\in\mathfrak{V}'(2g+1)$ specifying a representation that does not kill $[a_1,b_1]\cdots[a_{g_0},b_{g_0}]$.  These representations limit to the original as $\epsilon\to 0$, and it follows that the set of representations killing $[a_1,b_1]\cdots[a_{g_0},b_{g_0}]$ is nowhere-dense in $\mathfrak{V}'(2g+1)$.  This handles the subcase of Lemma \ref{six cases}(6) corresponding to presentation (\ref{fungi}).

Inspecting the presentation (\ref{fungi}), we find that a representation of this group that kills $c$ must also kill $[a_1,b_1]\cdots[a_g,b_g]$, so it falls under the $g_0=g$ case of Lemma \ref{six cases}(6) considered above.  The construction above produces representations that do not kill $c$, since $y=0$ initially so $y+\epsilon = \epsilon$, so that construction also takes case of case (2) of Lemma \ref{six cases}.

We finally address the subcase of Lemma \ref{six cases}(3) corresponding to presentation (\ref{fungi}), that is, representations of that group that kill $a_i$.  These correspond to points of $\mathfrak{V}'(2g+1)$ of the form $\bfx=(1,0,z_1,w_1,x_2,\hdots,w_g,1,y)$.  The set of such representations is evidently closed, and we show it is nowhere dense by deforming any such $\bfx$ to $(1,\epsilon,z_1,\hdots,w_g,1,y(\epsilon))$ for
\[ y(\epsilon) = -\frac{1}{2}\left(p(1,\epsilon,z_1,w_1)+\sum_{i=2}^g p(x_i,y_i,z_i,w_i)\right). \]

The cases of Lemma \ref{six cases} that remain to consider are those involving the presentation (\ref{bungee}).  As with the cases involving (\ref{fungi}), we will consider a representation space $\mathfrak{V}'(2g+2)$ which is homeomorphic to $\mathfrak{V}(n)$ for $n=2g+2$, for the same reason as before, but as a subspace of Euclidean space is adapted to the current presentation.  To define $\mathfrak{V}'(2g+2)$ we first observe that for $C = \left(\begin{smallmatrix} x & y \\ 0 & 1/x \end{smallmatrix}\right)$ and $D=\left(\begin{smallmatrix} z & w \\ 0 & 1/z \end{smallmatrix}\right)$ we have\begin{align}\label{goo}
 CDCD^{-1} = \begin{pmatrix} x^2 & xy(z^2+1/x^2)+zw(1-x^2) \\ 0 & 1/x^2 \end{pmatrix} \end{align}
Since the relation of presentation (\ref{bungee}) sets $cdcd^{-1}$ equal to a product of commutators, in $\mathfrak{V}'(2g+2)$ we will necessarily have $x^2 = 1$ in the product above.  When $x^2 = 1$ the upper-right entry of $CDCD^{-1}$ simplifies to $xy(z^2+1)$.  So we define $\mathfrak{V}'(2g+2)$ as\begin{align*}
	 & \{(x_1,y_1,z_1,w_1,\hdots,x_{g+1},y_{g+1},z_{g+1},w_{g+1})\,|\,x_i,z_i\in\mathbb{R}-\{0\}\ \mbox{and}\ y_i,w_i\in\mathbb{R}\ \forall\ i,\\
	&\qquad x_{g+1}^2 = 1,\ \mbox{and}\ \sum_{i=1}^g p(x_i,y_i,z_i,w_i) + x_{g+1}y_{g+1}(z_{g+1}^2+1) = 0 \} \end{align*}
As above, we further restrict to the closed subspace where all $x_i$ and $z_i$ are positive, so in particular we will always take $x_{g+1} = 1$ below.

The subcase of Lemma \ref{six cases}(6) corresponding to presentation (\ref{bungee}) is only a slight modification of the subcase corresponding to (\ref{fungi}): given $\bfx = (x_1,\hdots,w_{g+1})$ specifying a representation killing $[a_1,b_1]\cdots[a_{g_0},b_{g_0}]$ we perturb $\bfx$ as in the previous subcase so that equation (\ref{poo}) again holds, then in this case replace $y_{g+1}$ by $y_{g+1}(\epsilon) = y_{g+1} + \epsilon/(z_{g+1}^2+1)$, yielding $(x_1,\hdots,w_1(\epsilon),\hdots,1,y_{g+1}(\epsilon),z_{g+1},w_{g+1})\in\mathfrak{V}'(2g+2)$ that does not kill $[a_1,b_1]\cdots[a_{g_0},b_{g_0}]$ and converges to $\bfx$ as $\epsilon\to 0$.

Case (4) of Lemma \ref{six cases} also follows from the subcase above, for the same reason that case (2) followed from the subcase of Lemma \ref{six cases}(6) corresponding to presentation (\ref{fungi}).  We are thus left only with the subcase of Lemma \ref{six cases}(3) corresponding to presentation (\ref{bungee}).  But this again follows analogously to the subcase corresponding to presentation (\ref{fungi}).
\end{proof}

\begin{remark}\label{Klein}  The Klein bottle's fundamental group has presentation $\langle c,d\,|\,cdcd^{-1} = 1\rangle$, so for $c\mapsto C$ and $d\mapsto D$ to determine a representation, the product $CDCD^{-1}$ of (\ref{goo}) must equal the identity matrix $I_2$.  This forces $x^2 = 1$, whereupon the upper-right entry simplifies to $\pm y(z^2+1)$.  For this to vanish we must have $y=0$, so $C = \pm I_2$.

If $C= I_2$ then since $c$ represents a simple closed curve the corresponding representation is compressible.  If $C=-I_2$ then the corresponding representation is torsion.  In fact this representation is incompressible if, say, $z=1$ and $w\neq 0$, since the only non-nullhomotopic simple closed curves on the Klein bottle are represented by $c$, $d$, $cd$, $d^2$ and $(cd)^2$.  However, projectivizing it kills $c$.\end{remark}

\section{The non-orientable genus-three surface}

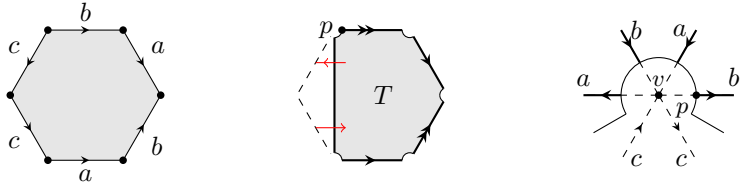
\begin{figure}
\begin{tikzpicture}

\begin{scope}[xshift=-1.5in]

\fill [opacity=0.1] (-0.5,-0.866) -- (0.5,-0.866) -- (1,0) -- (0.5,0.866) -- (-0.5,0.866) -- (-1,0);
\fill (-0.5,-0.866) circle [radius=0.05];
\fill (0.5,-0.866) circle [radius=0.05];
\fill (1,0) circle [radius=0.05];
\fill (0.5,0.866) circle [radius=0.05];
\fill (-0.5,0.866) circle [radius=0.05];
\fill (-1,0) circle [radius=0.05];

\draw [directed] (-0.5,-0.866) to (0.5,-0.866);
\draw [directed] (0.5,-0.866) to (1,0);
\draw [reverse directed] (1,0) to (0.5,0.866);
\draw [reverse directed] (0.5,0.866) to (-0.5,0.866);
\draw [directed] (-0.5,0.866) to (-1,0);
\draw [directed] (-1,0) to (-0.5,-0.866);

\node [below] at (0,-0.866) {$a$};
\node [below right] at (0.75,-0.433) {$b$};
\node [above right] at (0.75,0.433) {$a$};
\node [above] at (0,0.866) {$b$};
\node [above left] at (-0.75,0.433) {$c$};
\node [below left] at (-0.75,-0.433) {$c$};

\end{scope}

\begin{scope}

\fill [opacity=0.1] (-0.5,-0.866) -- (0.5,-0.866) -- (1,0) -- (0.5,0.866) -- (-0.5,0.866);

\draw [thick, directed] (-0.5,-0.866) to (0.5,-0.866);
\draw [thick, ddirected] (0.5,-0.866) to (1,0);
\draw [thick, reverse directed] (1,0) to (0.5,0.866);
\draw [thick, reverse ddirected] (0.5,0.866) to (-0.5,0.866);
\draw [thick] (-0.5,0.866) to (-0.5,-0.866);

\draw [dashed] (-0.5,0.866) -- (-1,0) -- (-0.5,-0.866);

\fill [color=white] (-0.5,-0.866) circle [radius=0.1];
\fill [color=white] (0.5,-0.866) circle [radius=0.1];
\fill [color=white] (1,0) circle [radius=0.1];
\fill [color=white] (0.5,0.866) circle [radius=0.1];
\fill [color=white] (-0.5,0.866) circle [radius=0.1];

\draw (-0.5,-0.766) arc (90:0:0.1);
\draw (0.4,-0.866) arc (180:60:0.1);
\draw (0.9,0) arc (180:120:0.1);
\draw (0.9,0) arc (180:240:0.1);
\draw (-0.5,0.766) arc (-90:0:0.1);
\draw (0.4,0.866) arc (180:300:0.1);

\fill (-0.4,0.866) circle [radius=0.05];
\node [left] at (-0.4,0.866) {$p$};
\node at (0.15,0) {$T$};

\draw [color=red, ->] (-0.35,0.433) to (-0.65,0.433);
\draw [color=red, <-] (-0.35,-0.433) to (-0.75,-0.433);
\draw [color=red] (-0.65,0.433) to (-0.75,0.433);

\end{scope}

\begin{scope}[xshift=1.5in]

\draw [thick, directed] (0.5,0) to (1,0);
\draw [thick, directed] (-0.5,0) to (-1,0);
\draw [thick, reverse directed] (-0.25,0.433) to (-0.5,0.866);
\draw [thick, reverse directed] (0.25,0.433) to (0.5,0.866);

\fill [color=white] (0,0) circle [radius=0.5];
\draw (0.5,0) arc (0:210:0.5);
\draw (0.5,0) arc (0:-30:0.5);
\fill (0.5,0) circle [radius=0.05];
\node [below left] at (0.52,0.02) {\small $p$};
\fill (0,0) circle [radius=0.05];
\node [above] at (0,0) {\small $v$};
\draw (0.433,-0.25) to (0.866,-0.5);
\draw (-0.433,-0.25) to (-0.866,-0.5);

\draw [dashed] (0,0) to (1,0);
\draw [dashed, reverse directed] (0,0) to (-0.5,-0.866);
\draw [dashed, directed] (0,0) to (0.5,-0.866);
\draw [dashed] (0,0) to (-1,0);
\draw [dashed] (0,0) to (0.5,0.866);
\draw [dashed] (0,0) to (-0.5,0.866);

\node [above] at (1,0) {$b$};
\node [left] at (0.5,0.866) {$a$};
\node [right] at (-0.5,0.866) {$b$};
\node [above] at (-1,0) {$a$};
\node [right] at (-0.5,-0.866) {$c$};
\node [left] at (0.5,-0.866) {$c$};

\end{scope}

\end{tikzpicture}
\caption{The non-orientable genus-three surface as a quotient of the hexagon, a punctured torus $T$ inside it, and a neighborhood of the original vertex quotient $v$.}
\label{genus 3 poly}
\end{figure}

In this section we prove Proposition \ref{genus 3}, describing explicit incompressible representations of $\pi_1\Upsilon_3$.  Its presentation used in that result is the $g=1$ case of the presentation (\ref{fungi}).  Correspondingly, see the left side of Figure \ref{genus 3 poly}, which is the $g=1$ case of Figure \ref{non-orientable models}(3).  With this presentation we have the following classification of fundamental group elements representing simple closed curves on $\Upsilon_3$, analogous to Theorem 5.1 of \cite{BS}.

\begin{lemma}\label{more BS} Every word in $\langle a,b,c\,|\,aba^{-1}b^{-1}c^2 = 1\rangle$ that represents a simple, non-nullhomotopic curve on $\Upsilon_3$ is conjugate to one of $a^{\pm 1}$, $b^{\pm 1}$, $c^{\pm 1}$, $c^{\pm 2}$, or to a word of one of the following forms:\begin{enumerate}
	\item\label{torus} up to replacing $a$ with $a^{-1}$, $b$ with $b^{-1}$, or exchanging $a$ and $b$,
	\[ w = a^{n_1}ba^{n_2}b\cdots a^{n_k}b, \]
	where $\{n_1,n_2,\hdots,n_k\}\subset \{n,n+1\}$ for some $n\in\mathbb{N}$; or
	\item $(ac)^{\epsilon}$, $(b^{-1}c)^{\epsilon}$, or $wc^{\epsilon}$ or for a word $w$ as in (\ref{torus}), where $\epsilon\in\{\pm 1,\pm 2\}$.\end{enumerate}
Moreover, for every word $w$ of the form (\ref{torus}) such that $w$, $wc^{\pm1}$, or $(wc^{\pm 1})^2$ represents a simple closed curve on $\Upsilon_3$, $k$ is relatively prime to $l = \sum_{i=1}^k n_i$.
\end{lemma}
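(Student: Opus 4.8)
The plan is to realize $\Upsilon_3$ as the union $T\cup_\delta M$ of a once-punctured torus $T$ and a M\"obius band $M$ glued along a circle $\delta=\partial T=\partial M$, where $\pi_1 T=\langle a,b\rangle$ is free, the core of $M$ is $c$, and the gluing is arranged so that $[\delta]=c^2=[b,a]$, recovering the presentation $\langle a,b,c\,|\,aba^{-1}b^{-1}c^2=1\rangle$; compare the left and center panels of Figure \ref{genus 3 poly}. Given a simple closed curve $\gamma$, I would first isotope it into minimal position with respect to $\delta$, so that $\gamma$ meets $\delta$ transversally in $2m$ points and $\gamma\cap T$, $\gamma\cap M$ are disjoint unions of essential simple arcs (no boundary-parallel arcs, and no bigons with $\delta$). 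The whole classification then follows from understanding the two pieces separately and controlling how they splice together along $\delta$.

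Next I would record the two local models. In the M\"obius band the only essential simple closed curve is the core $c$, while every essential properly embedded arc is isotopic to one fixed arc crossing the core exactly once; hence in $\pi_1 M=\langle c\rangle$ each such arc contributes a factor $c^{\pm1}$, and any collection of disjoint essential arcs in $M$ is mutually parallel. In the punctured torus I would invoke the classical classification of essential simple closed curves and simple arcs by their slope in $\mathbb{Q}\cup\{\infty\}$: up to the symmetries $a\mapsto a^{-1}$, $b\mapsto b^{-1}$, and $a\leftrightarrow b$, the object of slope $l/k$ in lowest terms is represented by the balanced word $a^{n_1}b\cdots a^{n_k}b$ with $\{n_1,\dots,n_k\}\subset\{n,n+1\}$, exactly form (\ref{torus}); this is the engine of the argument, parallel to Theorem 5.1 of \cite{BS} and obtained by an inductive continued-fraction reduction. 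Since such a word has $l$ letters $a$ and $k$ letters $b$, its class in $H_1(T)=\mathbb{Z}\langle[a]\rangle\oplus\mathbb{Z}\langle[b]\rangle$ is $(l,k)$, and primitivity of this class for a nonseparating simple curve (equivalently, the slope being reduced) yields the coprimality $\gcd(k,l)=1$ of the ``moreover'' clause.

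I would then assemble the classification by the crossing number $m$. When $m=0$, either $\gamma\subset M$, forcing $\gamma=c$ and giving $c^{\pm1}$, or $\gamma\subset T$, giving $a^{\pm1}$, $b^{\pm1}$, the separating boundary $\delta$ (representing $c^{\pm2}$), or a form-(\ref{torus}) word. When $m=1$, the curve $\gamma$ is the splice of one essential torus arc with one M\"obius arc (factor $c^{\pm1}$), producing the one-sided curves $wc^{\pm1}$ of form (2) with $\epsilon=\pm1$, where the torus arcs of slope $\infty$ and $0$ give the base cases $(ac)^{\pm1}$ and $(b^{-1}c)^{\pm1}$ and a genuine balanced arc gives $wc^{\pm1}$.

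The step I expect to be the main obstacle is the case $m\ge2$: I must show that the only additional simple closed curves are the two-sided doubles $(wc^{\pm1})^2$ (the form-(2) curves with $\epsilon=\pm2$), realized as the boundary of the M\"obius neighborhood of a one-sided $wc^{\pm1}$, and that $m\ge3$ does not occur. The leverage is that all essential arcs of $\gamma\cap M$ are mutually parallel, so their $2m$ endpoints lie on $\delta$ in a forced nested pattern; combining this with the disjointness and slope constraints on the $m$ arcs of $\gamma\cap T$, I would argue that closing up into a single embedded curve forces the splicing data, when $m=2$, to be that of a connected double of an $m=1$ configuration, and to be inconsistent (disconnected or self-intersecting) when $m\ge3$. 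Making this combinatorial reduction precise, rather than the now-classical punctured-torus bookkeeping, is where the real work lies. The coprimality in the ``moreover'' clause then holds uniformly across the three families $w$, $wc^{\pm1}$, and $(wc^{\pm1})^2$, since each shares the same torus part $w$ of reduced slope $l/k$; and I would finally cross-check the resulting list against Scharlemann's description \cite{Scharlemann} of the curve complex of $\Upsilon_3$ to confirm completeness.
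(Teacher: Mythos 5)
Your outline shares its architecture with the paper's proof --- both split $\Upsilon_3$ into the punctured torus $T$ and a M\"obius neighborhood of the one-sided curve represented by $c$, get the torus words from a Birman--Series-type classification as in Theorem 5.1 of \cite{BS}, and obtain the coprimality clause by abelianizing $\pi_1 T\to\pi_1\overline{T}$ (that last step of yours is essentially identical to the paper's and is sound). The genuine gap is exactly where the paper leans on \cite{Scharlemann}: the trichotomy that every essential simple closed curve is isotopic into $T$, meets the core $C$ once, or bounds a regular neighborhood of a curve meeting $C$ once (\cite[Lemma 2.1 and p.~180]{Scharlemann}). You propose to reprove this via minimal position with $\delta=\partial T$, but you leave precisely that step --- the case $m\geq 2$ --- as an unexecuted plan, and the leverage you state for it is wrong. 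Disjoint essential arcs in a M\"obius band do \emph{not} have nested endpoints on $\delta$: in the model $M=[0,1]\times[0,1]/\bigl((0,y)\sim(1,1-y)\bigr)$, the vertical arcs $\{x_i\}\times[0,1]$ with $x_1<\cdots<x_m$ have endpoints in cyclic order $b_1,\dots,b_m,t_1,\dots,t_m$ on $\delta$ with arc $i$ joining $b_i$ to $t_i$, so every two endpoint pairs \emph{interleave} (the ``antipodal'' pattern); nesting is the pattern of inessential, boundary-parallel arcs. This linking is the very constraint your reduction must exploit, so an argument built on nestedness would fail. Moreover, the $m=2$ case is not mere bookkeeping: disjoint essential arcs in a punctured torus can realize up to three distinct slopes (consider an ideal triangulation), so you must actually rule out mixed splicings of the form $w_1c^{\pm1}w_2c^{\pm1}$ with $w_1\neq w_2$ before concluding that $m=2$ gives only the doubles $(wc^{\pm1})^2$, and separately show no $m\geq 3$ configuration closes up into a single embedded curve.

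A secondary omission: your $m=1$ case must produce the asymmetry in the lemma's list --- $ac$ and $b^{-1}c$ appear while $a^{-1}c$ and $bc$ do not. In your framework this has to fall out of a careful computation of the word a spliced arc represents, tracking the half-twist of $M$ and the basing arcs along $\delta$; the paper instead derives it from Scharlemann's count (\cite[p.~176]{Scharlemann}) that a twice-punctured projective plane carries exactly two isotopy classes of non-boundary-parallel one-sided simple closed curves, one of which is $c$. So while your route would be more self-contained than the paper's if completed, as written it asserts rather than proves the hardest reduction, and the one combinatorial fact you offer toward it is stated incorrectly.
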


\begin{proof}  Inspecting Figure \ref{genus 3 poly}, it is clear that $c$ represents a one-sided simple closed curve $C$ on $\Upsilon_3$.  As we pointed out in the proof of Lemma \ref{six cases}(2), the complement of this curve's regular neighborhood is an orientable surface with one boundary component; in this case a one-holed torus.  Scharlemann proves that $C$ is the unique simple closed curve on $\Upsilon_3$ with orientable complement in Lemma 2.1 of \cite{Scharlemann}.  His discussion on p.~180 there further establishes that every non-nullhomotopic simple closed curve on $\Upsilon_3$ is isotopic to one that either lies in the punctured torus complement of $C$, intersects $C$ once, or bounds a regular neighborhood of a curve intersecting $C$ once.  (All curves that intersect $C$ once are one-sided.)

We will relocate the basepoint from the original vertex quotient $v$ to the point $p$ pictured on the right side of Figure \ref{genus 3 poly}, on the boundary of the regular neighborhood of $C$ (the torus $T$ in the Figure is the complement of such a regular neighborhood).  Let $\alpha$ be the directed arc joining $v$ to $p$ along the dashed line segment in the figure.  It is not hard to see that $\gamma\mapsto \alpha.\gamma.\bar\alpha$ determines an isomorphism from $\pi_1 (\Upsilon_3,p)$ to $\pi_1(\Upsilon_3,v)$ taking the based homotopy classes of $a'$ to $a$, $b'$ to $b$, and $c'$ to $c$, where $a'$ and $b'$ run along the respective arcs of intersection of $a$ and $b$ with $T$ and the curved arc in the boundary of the neighborhood of $v$ pictured on the right side of Figure \ref{genus 3 poly}, and $c' = \bar\alpha.c.\alpha$.  We will therefore abuse notation below and freely refer to the elements $a'$ and $b'$ of $\pi_1 (T,p)$ as $a$ and $b$, and $c'$ as $c$, since the relation $aba^{-1}b^{-1}c^2 = 1$ is still satisfied in this case.

A simple closed curve on $\Upsilon_3$ that is disjoint from $C$ is isotopic into $T$, so by Theorem 5.1 of \cite{BS} it is represented in $\pi_1 (T,p)$ by $a^{\pm 1}$, $b^{\pm 1}$, the commutator $aba^{-1}b^{-1}$ or its inverse, or a word of the form (\ref{torus}).  The relation implies that the commutator equals $c^{-2}$ in $\pi_1 (\Upsilon_3, p)$.  

A simple closed curve that intersects $C$ once can be straightened by an isotopy near $C$ so that its intersection with the complement of $T$ resembles the red arcs in the middle of Figure \ref{genus 3 poly}.  Having done so, let $\gamma_0$ be the arc of intersection with $T$ and $\gamma_1$ the complementary arc, oriented as indicated in the Figure.  

Let $\beta_0$ be the arc from $p$ along $\partial T$ to its near intersection point with $\gamma_0$, the terminal point of $\gamma_0$, in the direction matching the orientation of $c^2$, and let $\beta_1$ be the arc in the same direction to the initial point of $\gamma_0$.  Then $\beta_1.\gamma_0.\gamma_1.\bar\beta_1$ represents the original simple closed curve; $\beta_1.\gamma_0.\bar\beta_0$ represents a simple closed curve on $T$; and $\beta_0.\gamma_1.\bar\beta_1$ has a basepoint-preserving homotopy to $c$.  That is, the original simple closed curve is represented by an element of the form $wc$ for some $w\in\pi_1(T,p)$ representing a simple closed curve on $T$.

If $w = a^{\pm 1}$ or $b^{\pm 1}$ then a regular neighborhood of $w\cup c$ (referring here to their representatives pictured in Figure \ref{genus 3 poly}) is homeomorphic to a twice-punctured projective plane.  One checks directly that $ac$ and $b^{-1}c$ represent simple closed curves, whence it follows from \cite[p.~176]{Scharlemann} that $a^{-1}c$ and $bc$ do not --- the twice-punctured $\mathbb{R}P^2$ has only two isotopy classes of non-boundary parallel, one-sided simple closed curves, and $c$ represents one.  We also observe that since $w=aba^{-1}b^{-1}$ equals $c^2$ in $\pi_1\Upsilon_3$, $wc = c^3$ is not simple and $wc^{-1} = c$ is already listed.  The only remaining possibilities are of the form $wc^{\pm 1}$ for $w$ as in (\ref{torus}).

By Scharlemann's result, all possibilities have now been listed save the squares of the elements described above.  For those elements described above which represent simple closed curves, these curves are one-sided, and so their regular neighborhoods are bounded by simple closed curves represented by the squares of their representatives.

The lemma's final assertion follows from the fact that for any word $w\in\pi_1 T$ that represents a simple closed curve, the inclusion-induced image of $w$ in the fundamental group of the (unpunctured) torus $\overline{T}$ also represents a simple closed curve.  But the inclusion-induced map $\pi_1 T\to \pi_1\overline{T}$ is simply the abelianization, and as is well known, the non-trivial words in $\pi_1\overline{T} = \langle \bar{a},\bar{b}\,|\, [\bar{a},\bar{b}]=1 \rangle$ that represent simple closed curves are of the form $\bar{a}^l\bar{b}^k$ for relatively prime integers $k$ and $l$.  Since every word $w$ of the form (\ref{torus}) has non-trivial abelianization, the result follows.
\end{proof}
 
\begin{prop}\label{genus 3}\GenusThreeProp\end{prop}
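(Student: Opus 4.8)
The plan is to verify directly that the prescribed assignment $a\mapsto A$, $b\mapsto B$, $c\mapsto C$ respects the single relation $aba^{-1}b^{-1}c^2=1$, is non-injective, lands in a torsion-free subgroup, and is incompressible; the real content is the last of these. First I would check the relation: by the computation of Section~\ref{spaces} we have $[A,B]=\left(\begin{smallmatrix}1 & p(x,y,z,w)\\ 0 & 1\end{smallmatrix}\right)$ with $p(x,y,z,w)=xy(1-z^2)-zw(1-x^2)$, and $C^2=\left(\begin{smallmatrix}1 & 2\cdot\frac{-1}{2}p\\ 0 & 1\end{smallmatrix}\right)=\left(\begin{smallmatrix}1 & -p\\ 0 & 1\end{smallmatrix}\right)$ since $C$ is unipotent with upper-right entry $-\tfrac12 p$; hence $[A,B]C^2=I_2$, so the map extends to a homomorphism $\rho\co\pi_1\Upsilon_3\to\mathrm{SL}_2(\mathbb{R})$. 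Torsion-freeness is immediate because all three images are upper-triangular with positive diagonal entries, and as noted in Section~\ref{orientable proof} such a subgroup is torsion-free; non-injectivity holds because the target is metabelian (two-step solvable) while $\pi_1\Upsilon_3$ is not.

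The heart of the argument is incompressibility, and here I would run through the classification of simple loops supplied by Lemma~\ref{more BS}. The strategy is: for each conjugacy class of word representing a simple closed curve, compute the upper-right entry of its image matrix and show it is nonzero under the stated hypotheses (diagonal entries are controlled by the conditions $x^pz^q\neq1$, so a word is in the kernel precisely when its image is $I_2$, equivalently has vanishing diagonal obstruction and vanishing upper-right entry). The generator cases are quick: $A,B\neq I_2$ since $x,z$ are multiplicatively independent (so in particular $x\neq1$, $z\neq1$), and $C,C^2\neq I_2$ since $p\neq0$ forces the upper-right entry $-\tfrac12 p\neq0$. For the words $w=a^{n_1}b\cdots a^{n_k}b$ of form (\ref{torus}) and their companions $wc^{\epsilon}$, $(ac)^{\epsilon}$, $(b^{-1}c)^{\epsilon}$, I would use that the diagonal entry of $\rho(w)$ is $x^{\sum n_i}z^{k}=x^{l}z^{k}$; since Lemma~\ref{more BS} guarantees $\gcd(k,l)=1$ (so $(l,k)\neq(0,0)$) and $x^pz^q\neq1$ for all nonzero $(p,q)$, this diagonal entry is $\neq1$, so $\rho(w)\neq I_2$ outright --- the word is not in the kernel regardless of its upper-right entry. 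The same diagonal computation kills $wc^{\epsilon}$ because multiplying by $C^{\epsilon}$ leaves the diagonal $x^l z^k$ unchanged.

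The main obstacle --- and the only place requiring genuine care --- is the family of \emph{curves whose image has trivial diagonal}, where incompressibility cannot be read off from the diagonal and one must show the upper-right entry is nonzero. These are exactly the loops $(ac)^{\pm1}$, $(b^{-1}c)^{\pm1}$, and the squares $(wc^{\pm1})^2$, together with the squares of the one-sided generators, since squaring or pairing a generator with $c$ is what produces a two-sided (even-diagonal) curve. For these I would compute the upper-right entry explicitly as a function of $x,y,z,w$ and exhibit it as a nonzero polynomial expression under the hypotheses $x,z\in(0,\infty)$, $x^pz^q\neq1$, and $p(x,y,z,w)\neq0$. For instance $AC=\left(\begin{smallmatrix}x & y-\tfrac12 p/x\\ 0 & 1/x\end{smallmatrix}\right)$ has diagonal $x\neq1$, so $(AC)^{\pm1}\neq I_2$ directly; the delicate cases are the squares $(wc)^2$ and $(wc^2)$ whose diagonal can be $1$ only when $x^{2l}z^{2k}=1$, which the multiplicative-independence hypothesis again forbids unless the exponent vanishes. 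I expect that after organizing by the diagonal exponent one finds that every simple loop either has $x^lz^k\neq1$ (settled immediately) or else reduces, via the relation $[A,B]=C^{-2}$, to a unipotent matrix whose single off-diagonal entry is a nonzero multiple of $p(x,y,z,w)$; the hypothesis $p\neq0$ then finishes it. The bookkeeping of writing each listed word's image and confirming the off-diagonal entry is a nonvanishing multiple of $p$ (using $\gcd(k,l)=1$ to rule out cancellation on the diagonal) is the step where the explicit form of the matrices, and in particular the carefully chosen value $-\tfrac12 p$ for the entry of $C$, does all the work.
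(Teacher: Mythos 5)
Your treatment of the proposition's first paragraph is, in substance, the paper's own argument: the homomorphism from the upper-triangular group to $\mathbb{R}-\{0\}$ reading off the upper-left entry (which factors through the abelianization and kills $c$) sends a word from Lemma \ref{more BS} with $a$-exponent $l$ and $b$-exponent $k$ to $x^lz^k$; the $\gcd(k,l)=1$ clause of that lemma gives $(k,l)\neq(0,0)$, so the hypothesis $x^pz^q\neq1$ certifies survival of every listed word except powers of $c$, and those survive because $C^{\pm1},C^{\pm2}$ are unipotent with upper-right entries $\mp\tfrac12 p$, $\mp p$ and $p\neq0$. One correction to your bookkeeping, though: the family you single out as ``curves whose image has trivial diagonal'' --- $(ac)^{\pm1}$, $(b^{-1}c)^{\pm1}$, $(wc^{\pm1})^2$ --- is misidentified. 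Those images have diagonal entries $x^{\pm1}$, $z^{\mp1}$, $x^{2l}z^{2k}$ respectively, all ruled out by multiplicative independence (as your own computation of $AC$ shows a moment later). The only listed words with unipotent image are $c^{\pm1}$ and $c^{\pm2}$, so the dichotomy is cleaner than your paragraph suggests: nontrivial diagonal settles everything except powers of $c$, and $p\neq0$ settles those; there is no residual family requiring a delicate off-diagonal computation.

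The genuine gap is that you never address the proposition's second paragraph: the claim that changing the lower-right entry of $C$ to $-1$ produces an incompressible, upper-triangular representation to $\mathrm{PGL}_2(\mathbb{R})$ that is two-sided in the sense of \cite{Zemke} for the orientation character given by the sign of the determinant. This is a separate assertion with its own content, and it does not follow formally from the first part: the modified matrix has determinant $-1$, so the verification of the defining relation, the kernel computations, and the torsion-freeness check all change --- note, for instance, that an upper-triangular matrix with diagonal entries $1$ and $-1$ squares to the identity, so the analysis of words involving $c$ is genuinely different from the $\mathrm{SL}_2(\mathbb{R})$ case and deserves explicit care. Beyond re-running the kernel check, the paper's proof contains a verification that has no counterpart anywhere in your write-up: two-sidedness itself, which by Lemma \ref{more BS} amounts to checking that the orientation-reversing simple closed curves are exactly those represented by $c$, $(ac)^{\pm1}$, $(b^{-1}c)^{\pm1}$, and $wc^{\pm1}$ --- the listed words with odd total $c$-exponent --- and that these are precisely the words whose images have negative determinant. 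Without an argument of this kind, an entire claim of the proposition remains unproved.
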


\begin{proof}  A straightforward computation checks that the prescribed images of $a$, $b$ and $c$ do determine a representation of $\pi_1\Upsilon_3$, as its relation is satisfied in the target.  (This was also verified in the proof of the main theorem, see in particular equation (\ref{ugga mugga}).)  There is a homomorphism from the set of upper-triangular matrices in $\mathrm{SL}_2(\mathbb{R})$ to the multiplicative group $\mathbb{R}-\{0\}$ that takes a matrix to its upper-left entry.  Composing this map with the representation of $\pi_1\Upsilon_3$ yields one which factors through the abelianization, and it is again straightforward to check that every word listed in Lemma \ref{more BS} maps non-trivially, save powers of $c$, if the hypothesis on $x$ and $z$ are satisfied.  And powers of $c$ are handled by the hypothesis on $p(x,y,z,w)$.

Now suppose we change ``$1$" to ``$-1$'' in the lower-right entry of $C$.  The relation $aba^{-1}b^{-1}c^2=1$ is still satisfied in the target, so this determines a representation to $\mathrm{GL}_2(\mathbb{R})$.  One checks explicitly as above that this kills no simple closed curve and has torsion-free image.  One checks two-sidedness explicitly as well, noting that orientation-reversing curves on $\Upsilon_3$ are represented only by $c$ or words of the form $(ac)^{\pm 1}$, $(b^{-1}c)^{\pm 1}$, and $wc^{\pm 1}$ from Lemma \ref{more BS}(2).  These are exactly the words whose images have negative determinant.\end{proof}

\bibliographystyle{plain}
\bibliography{solvable}

\end{document}